\documentclass[12pt]{article}

\usepackage{smyth}

\newcommand{\range}{\text{range}}

\title{\scshape A Probabilistic Characterization of the Dominance Order on Partitions}
\author{Clifford Smyth\thanks{Supported by NSA MSP Grant H98230-13-1-0222.}\\ University of North Carolina, Greensboro \\ {\tt cdsmyth@uncg.edu}}

\begin{document}

\maketitle

\begin{abstract}
A probabilistic characterization of the dominance partial order on the set of partitions is presented.  This extends work in ''Symmetric polynomials and symmetric mean inequalities''. Electron. J. Combin., 20(3): Paper 34, 2013.

Let $n$ be a positive integer and let $\nu$ be a partition of $n$.  Let $F$ be the Ferrers diagram of $\nu$, a table of rows of cells, the $i$th row containing $\nu(i)$ cells.   Let $m$ be a positive integer and let $p \in (0,1)$.  Fill each cell of $F$ with balls, the number of which is independently drawn from the random variable $X = \Bin(m,p)$.  Given non-negative integers $j$ and $t$, let $P(\nu,j,t)$ be the probability that the total number of balls in $F$ is $j$ and that no row of $F$ contains more that $t$ balls.  We show that if $\nu$ and $\mu$ are partitions of $n$, then $\nu$ dominates $\mu$, i.e. $\sum_{i=1}^k \nu(i) \geq \sum_{i=1}^k \mu(i)$ for all positive integers $k$, if and only if $P(\nu,j,t) \leq P(\mu,j,t)$ for all non-negative integers $j$ and $t$.  It is also shown that this same result holds when $X$ is replaced by any one member of a large class of random variables.

Let $p = \{p_n\}_{n=0}^\infty$ be a sequence of real numbers.  Let $\mathbb{N}$ be the set of non-negative integers together with the usual order.  Let ${\cal T}_p$ be the $\mathbb{N}$ by $\mathbb{N}$ matrix with $({\cal T}_p)_{i,j} = p_{j-i}$ for all $i, j \in \mathbb{N}$. Here we take $p_n = 0$ for all negative integers $n$. For all $i,j \in \mathbb{N}$, let $(p^i)_j$ be the coefficient of $x^j$ in $(p(x))^i$ where $p(x) = \sum_{n=0}^\infty p_n x^n$. Here we take $(p(x))^0 = 1$.  Let ${\cal S}_p$ be the $\mathbb{N}$ by $\mathbb{N}$ matrix with $({\cal S}_p)_{i,j} = (p^i)_j$ for all $i, j \in \mathbb{N}$. We say that a matrix $M$ is totally non-negative of order $k$ if all of the minors of $M$ of order $k$ or less are non-negative.  We show that if ${\cal T}_p$ is totally non-negative of order $k$ then so is ${\cal S}_p$.  The case $k=2$ of this result is a key step in the proof of the result on domination.  We also show that the case $k=2$ would follow from a combinatorial conjecture that might be of independent interest.
\end{abstract}

\section{Introduction}

Let $\N$ denote the set of non-negative integers. A \emph{partition} is a function $\lambda : \N \setminus \{0\} \to \N$ that is \emph{non-increasing} and has \emph{finite support}, i.e. such that $\lambda(s) \geq \lambda(t)$ for all $s,t \in \N$ with $s < t$ and $\supp(\lambda) = \{i \in \N \setminus \{0\} : \lambda(i) \neq 0\}$ is finite. The \emph{weight} of $\lambda$ is $|\lambda| = \sum_{i=0}^\infty \lambda(i)$.  If $n \in \N$ and $\lambda$ is a partition we say that \emph{$\lambda$ is a partition of $n$} if $|\lambda| = n$.   Let $\cP$ be the set of all partitions and for all $n \in \N$, let $\cP_n$ be the set of partitions of weight $n$. 

We define the \emph{dominance partial order}, $\trianglelefteq$, on $\cP$ as follows.  If $\lambda, \mu \in \cP$, we say \emph{$\lambda$ is dominated by $\mu$} (or \emph{$\mu$ dominates $\lambda$}) if and only if $|\lambda| = |\mu|$ and $\sum_{i=1}^j \lambda(i) \leq \sum_{i=1}^j \mu(i)$ for all positive integers $j$.   We denote this by $\lambda \trianglelefteq \mu$ (or $\mu \trianglerighteq \lambda$).

The dominance order is a special case of the more general majorization order.  If $m$ is a positive integer, the \emph{majorization order} on $\R^m$ is defined as follows. If $x \in \R^m$ let $x_p \in \R^m$ be the \emph{non-increasing rearrangement of $x$}.  I.e. $(x_p)(i) = (x_p)_{\pi(i)}$ for all $i \in [m]$ where $\pi \in S_m$ is chosen so that $(x_p)_1 \geq (x_p)_2 \geq \cdots \geq (x_p)_m$. If $x,y \in \R^m$ we say \emph{$x$ is majorized by $y$} if and only if $\sum_{i=1}^m x_i = \sum_{i=1}^m y_i$ and $\sum_{i=1}^j (x_p)_i \leq \sum_{i=1}^j (y_p)_i$ for all integers $j$ with $1 \leq j \leq m$.  Evidently for all $\lambda, \mu \in \cP^m$ or $\cU^m$, we have $\lambda \trianglelefteq \mu$ if and only if $\lambda$ is majorized by $\mu$.

The dominance and majorization orders frequently come up as definitions of key importance in many disparate fields in mathematics from the social sciences to representation theory, see \cite{James:1981aa,Marshall:2011aa}.

Continuing work begun in \cite{Mahlburg:2013aa}, the author presents a probabilistic characterization of the dominance partial order, $(\cP, \trianglelefteq)$.

Let $M$ be a finite or infinite matrix and let $k \in \N$.  We say $M$ is \emph{totally non-negative} (respectively, \emph{totally positive}) \emph{of order $k$} if and only if every minor of $M$ of size $k$ or less is non-negative (respectively, positive).  We will abbreviate this by writing $M \in TN_k$ (respectively, $M \in TP_k$). See \cite{Fallat:2011aa, Pinkus:2010aa}.

Let $p = \{p_n\}_{n=0}^\infty$ be a sequence of real numbers. We define the $\N$ by $\N$ matrix $\cT_p$ with $(\cT_p)_{i,j} = p_{j-i}$ for all $i,j \in \N$.  Here we take $p_n = 0$ for negative integers $n$. 

We say that $p$ is \emph{totally non-negative} (respectively, \emph{totally positive}) \emph{of order $k$} if and only if $\cT_p \in TN_k$ (respectively, $\cT_p \in TP_k$) and denote this by $p \in TN_k$ (respectively, $p \in TP_k$).

Let $\R[[x]]$ be the ring of formal power series over $\R$.  If $p = \{p_n\}_{n=0}^\infty$, then the \emph{ordinary generating function of $p$} is the element $p(x)  = \sum_{n=0}^\infty p_n x^n$ of $\R[[x]]$. We say $p(x)$ is  $TN_k$ (respectively, $TP_k$) if and only if $p$ has the same property.

Let $X$ be an $\N$-valued random variable. We say $p = p_X = \{P(X=n)\}_{n=0}^\infty$ is the \emph{sequence of probabilities of $X$} and $p_X(x) = \E x^X = \sum_{n=0}^\infty P(X=n) x^n$ is the \emph{probability generating function of $X$}.  We say $X$ is $TN_k$ (respectively, $TP_k$) if $p_X$ (or, equivalently, $p_X(x)$) has the same property.  We define the \emph{range of $X$} to be $\text{range}(X) = \{n \in \N: P(X=n) \neq 0\}$.

If $n \in \N$ and $\nu$ is a partition of $n$, let $\cY(\nu) = (Y_i : i \in \N \setminus \{0\})$ be a sequence of independent random variables where, for each $i \in \N$, $Y_i$ is distributed as the sum of $\nu(i)$ independent copies of $X$.  If $\nu(i) = 0$, we define $Y_i$ to be identically $0$.  If $j, t \in \N$, let $E(\nu, X, j,t)$ be the event that $Y_i \leq t$ for all $i \in \N \setminus \{0\}$ and $\sum_{i=1}^\infty Y_i = j$.  If $\lambda$ and $\mu$ are partitions, let $C(\lambda,\mu,X)$ be the condition that \beq{E:Condprob} P(E(\lambda,X,j,t)) \leq P(E(\mu,X,j, t)),  \text{ for all $j, t \in \N$.}\enq

Our main theorems are Theorems \ref{T:main1}, \ref{T:main2} and \ref{T:TS}, listed below.

\begin{theorem} \label{T:main1}
Let $X$ be an $\N$-valued random variable.  Suppose $X \in TN_2$ and $\range(X) = \{0,1\ldots,r\}$ for some positive integer $r$. Then, for all $n \in \N$ and all partitions $\lambda$ and $\mu$ of $n$, $\lambda \trianglerighteq \mu$ if and only if $C(\lambda,\mu,X)$.
\end{theorem}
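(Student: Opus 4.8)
The plan is to prove the two implications separately. The forward direction, $\lambda \trianglerighteq \mu \Rightarrow C(\lambda,\mu,X)$, will rest on the case $k=2$ of the stated $\cT_p\to\cS_p$ theorem, and the converse on a conjugate-partition/degree argument. For the forward direction I would first reduce to a single \emph{elementary move}. Recall (Brylawski) that the dominance order on $\cP_n$ is the transitive closure of the moves that take one cell from a row $b$ and place it in a row $a$ with $a<b$, the result remaining a partition. Since $C$ chains transitively — if $P(E(\lambda,X,j,t))\le P(E(\kappa,X,j,t))$ and $P(E(\kappa,X,j,t))\le P(E(\mu,X,j,t))$ for all $j,t$, then the same holds for $\lambda,\mu$ — it suffices to prove $C(\lambda,\mu,X)$ when $\lambda$ is obtained from $\mu$ by one such move. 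Writing $c=\mu(a)$ and $d=\mu(b)$, we have $c\ge d\ge 1$, $\lambda(a)=c+1$, $\lambda(b)=d-1$, and all other rows equal. Because the rows are filled independently, the events in the rows other than $a,b$ contribute a common nonnegative factor; conditioning on the total $v$ of the two changed rows, it suffices to establish, for every $v$, the two-row inequality $g_{c+1,d-1}(v)\le g_{c,d}(v)$, where $g_{a',b'}(v)=\sum_{x+y=v,\ x\le t,\ y\le t}(p^{a'})_x (p^{b'})_y$.

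The heart of the argument is this two-row inequality, which I expect to be the main obstacle. Introduce the truncation $[f]_{\le t}$ of a power series (keeping only terms of degree $\le t$), so that $g_{a',b'}(v)=[x^v]\bigl([p^{a'}]_{\le t}\,[p^{b'}]_{\le t}\bigr)$. Using $p^{c+1}=p\cdot p^{c}$ and $p^{d}=p\cdot p^{d-1}$, I would peel off one factor of $p$ to write $g_{c,d}(v)-g_{c+1,d-1}(v)=\sum_{w}p_w\,\Sigma_w$, where $\Sigma_w$ compares the window $[v-t,t]$ against its translate by $w$. That window and its translate are reflections of one another about $(v-w)/2$; pairing each point with its reflection $s'=v-w-s$ and cancelling the common part, $\Sigma_w$ becomes a sum of terms $(p^{d-1})_{s}(p^{c})_{s'}-(p^{d-1})_{s'}(p^{c})_{s}$ with $s<s'$. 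Each such term is exactly a $2\times 2$ minor of $\cS_p$ on rows $d-1<c$ (the inequality $c\ge d$ is what puts these rows in the correct order), hence nonnegative: $X\in TN_2$ forces $\cT_p\in TN_2$, so $\cS_p\in TN_2$ by the case $k=2$ of the $\cT_p\to\cS_p$ theorem. Thus $\Sigma_w\ge0$ for every $w$, giving $g_{c,d}(v)\ge g_{c+1,d-1}(v)$. The delicate points will be the boundary behaviour when the translated window runs off the nonnegative indices, and the bookkeeping that turns every paired term into a genuine minor in the correct row order.

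For the converse I would prove the contrapositive, and here the hypothesis $\range(X)=\{0,1,\dots,r\}$ (a full interval, no gaps) is essential. Summing the inequality in $C$ against $x^j$ gives $\sum_j P(E(\nu,X,j,t))\,x^j=\prod_{i}[p^{\nu(i)}]_{\le t}(x)$, a finite product whose degree is $\sum_i \min(\nu(i)r,\,t)$. Taking $t=cr$, this degree equals $r\sum_i \min(\nu(i),c)$; since $\sum_i \min(\nu(i),c)$ is a prefix sum of the conjugate partition $\nu'$, the standard equivalence $\lambda\trianglerighteq\mu \iff \lambda'\trianglelefteq\mu'$ shows that $\lambda\not\trianglerighteq\mu$ holds exactly when $\sum_i\min(\lambda(i),c)>\sum_i\min(\mu(i),c)$ for some positive integer $c$. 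For that $c$ I would set $t=cr$ and let $j=r\sum_i\min(\lambda(i),c)$ be the degree of the $\lambda$-product. Because $\range(X)$ has no gaps, every coefficient $(p^{m})_{v}$ with $0\le v\le mr$ is strictly positive, so the top coefficient of the $\lambda$-product is positive, while $j$ strictly exceeds the degree of the $\mu$-product. Hence $P(E(\lambda,X,j,t))>0=P(E(\mu,X,j,t))$, contradicting $C(\lambda,\mu,X)$. This yields $C(\lambda,\mu,X)\Rightarrow\lambda\trianglerighteq\mu$ and completes the plan.
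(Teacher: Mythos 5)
Your proposal is correct and takes essentially the same route as the paper: your forward direction reduces to a single box-move and establishes the paper's two-row inequality \eqref{E:AB}, where your peeling of one factor of $p$ followed by the reflection pairing is, term for term, the paper's cancellation of common terms and exchange of the summation variables, both ultimately resting on $\cS_p \in TN_2$ via the $k=2$ case of Theorem \ref{T:TS}. Your converse, via degrees of the truncated products and prefix sums of the conjugate partition, is the paper's proof of Theorem \ref{T:main4} stated in contrapositive form with an explicit witness $(j,t)$.
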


\begin{theorem} \label{T:main2}  For all $n \in \N$ and all partitions $\lambda$ and $\mu$ of $n$, $\lambda \trianglerighteq \mu$ if and only if we have $C(\lambda,\mu,X)$ for all $\N$-valued random variables $X$ with $X \in TN_2$.
\end{theorem}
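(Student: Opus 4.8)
The plan is to derive Theorem~\ref{T:main2} from Theorem~\ref{T:main1} by combining a locality property of the probabilities $P(E(\nu,X,j,t))$ with a scaling (homogeneity) trick and a density argument. The backward implication is the easy one: assuming $C(\lambda,\mu,X)$ for every $\N$-valued $X \in TN_2$, I would simply apply it to a single convenient bounded variable. The Bernoulli variable $B$ with $P(B=1)=p$ and $P(B=0)=1-p$ for a fixed $p\in(0,1)$ has probability sequence $(1-p,p,0,0,\dots)$, which is log-concave with no internal zeros and hence (by the standard characterization of order-two total non-negativity) lies in $TN_2$, and $\range(B)=\{0,1\}$. The ``if'' direction of Theorem~\ref{T:main1} applied to $B$ then yields $\lambda\trianglerighteq\mu$.

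The forward implication is where the work lies, and the first step is a locality observation. Fix $\lambda\trianglerighteq\mu$ in $\cP_n$, fix $j,t\in\N$, and fix an arbitrary $X\in TN_2$ with probability sequence $p=(p_0,p_1,\dots)$. On the event $E(\nu,X,j,t)$ every cell must receive at most $t$ balls, since a single cell exceeding $t$ would already push its row sum above $t$. Hence $P(E(\nu,X,j,t))$ equals the sum of $\prod_{\text{cells}}p_{c(\text{cell})}$ over all fillings $c$ of the $n$ cells by values in $\{0,1,\dots,t\}$ with every row sum at most $t$ and total exactly $j$. I would record this as $P(E(\nu,X,j,t))=Q_\nu(p_0,\dots,p_t)$, where $Q_\nu$ is a polynomial depending only on the truncation $(p_0,\dots,p_t)$ and homogeneous of degree $n$. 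It therefore suffices to prove the single inequality $Q_\lambda(q)\le Q_\mu(q)$ for every $q=(p_0,\dots,p_t)$ that arises by truncating a $TN_2$ sequence; because $X\in TN_2$ forces $p$ to be log-concave with no internal zeros, each such $q$ is a non-negative log-concave sequence whose support is an interval.

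When that supporting interval contains $0$, say it is $\{0,\dots,s'\}$ with $s'\ge 1$, I would normalize $q$ to a probability vector $\hat q=q/\sum_i q_i$; scaling preserves log-concavity, so $\hat q$ is the probability sequence of an $\N$-valued variable $\hat X\in TN_2$ with $\range(\hat X)=\{0,\dots,s'\}$. The ``only if'' direction of Theorem~\ref{T:main1} then gives $Q_\lambda(\hat q)\le Q_\mu(\hat q)$, and homogeneity upgrades this to $Q_\lambda(q)\le Q_\mu(q)$. The hard part will be the remaining truncations, whose support is an interval $\{s,\dots,s'\}$ with $s\ge1$ (so $0$ is omitted), because Theorem~\ref{T:main1} does not apply to a variable whose range misses $0$. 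I would handle these by a limiting argument: such a $q$ is a limit, as $\rho\to0^+$, of the log-concave sequences obtained by prepending the geometric values $q_i=q_s\rho^{\,s-i}$ for $i<s$, each of which is supported on the full interval $\{0,\dots,s'\}$ and hence is covered by the previous case. Since $Q_\lambda$ and $Q_\mu$ are polynomials and thus continuous, the inequality already proved on the support-containing-$0$ sequences passes to the closure and so holds at $q$. Ranging over all $j$ and $t$ then gives $C(\lambda,\mu,X)$. The one routine point to verify with care is that the prepended geometric tail keeps the sequence log-concave across the join at index $s$, and that the degenerate single point masses are consistent with $\lambda\trianglerighteq\mu$, which there reduces to $\lambda(1)\ge\mu(1)$.
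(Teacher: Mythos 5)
Your proposal is correct, but it takes a genuinely different route from the paper. The paper does not derive Theorem \ref{T:main2} from Theorem \ref{T:main1}: both are stated as immediate corollaries of Theorem \ref{T:main3} (for \emph{every} $X \in TN_2$, with no range hypothesis, $\lambda \trianglerighteq \mu$ implies $C(\lambda,\mu,X)$) and Theorem \ref{T:main4}, and Theorem \ref{T:main3} is proved directly: the condition is rephrased via Lemma \ref{L:generating} as coefficient-wise domination of products of truncated powers $p^{\lambda(i)}(x)|_t$, reduced along covers of the dominance order (Lemma \ref{L:cover}), and settled by the inequality $(p^{A+1}(x)|_t)(p^B(x)|_t) \sqsubseteq (p^{A}(x)|_t)(p^{B+1}(x)|_t)$, which comes from $\cS_p \in TN_2$, the case $k=2$ of Theorem \ref{T:TS}. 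You instead treat Theorem \ref{T:main1} as a black box and upgrade its bounded full-range case to arbitrary $X \in TN_2$ by locality (on $E(\nu,X,j,t)$ each cell holds at most $t$ balls, so the probability is a degree-$n$ homogeneous polynomial in $p_0,\ldots,p_t$), normalization, and a $\rho \to 0^+$ limit with a prepended geometric tail to reach truncations whose support misses $0$. The steps check out, including the two you flag: the join stays log-concave once $\rho \leq q_s/q_{s+1}$, and a log-concave sequence with interval support $\{0,\ldots,s'\}$ is $TN_2$ by the ratio argument of Lemma \ref{L:basic}(iv) combined with Lemma \ref{L:char}(ii) --- though since part (iv) as stated assumes positivity everywhere, you should record this small extension, or more simply observe that truncating a $TN_2$ sequence and padding with zeros preserves $TN_2$ directly from Lemma \ref{L:char}(ii); the identically-zero truncation gives trivial equality. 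What each approach buys: the paper's route is self-contained and yields the stronger Theorem \ref{T:main3}; yours shows the general $TN_2$ statement is a formal consequence of the bounded full-range case via homogeneity and continuity --- an economical deduction, though within the paper's development it runs logically backwards, since Theorem \ref{T:main1}'s forward direction is itself proved only through Theorem \ref{T:main3}, which already covers all $X \in TN_2$.
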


The line of investigation that led to Theorems \ref{T:main1} and \ref{T:main2} began in \cite{Mahlburg:2013aa} where it was proved that for any $p \in (0,1)$ if $X = \Bin(1,p)$ then $\lambda \trianglerighteq \mu$ implies $C(\lambda,\mu,X)$.

Corollary \ref{Cor}, listed below, is a pictorial description of two special cases of Theorem \ref{T:main1}.  Let $F(\lambda) = \{(i,j): i \in \Z_{>0}, 1 \leq j \leq \lambda(i)\}$ be the \emph{Ferrers diagram} of $\lambda$.  Customarily, the $(i,j)$th \emph{cell} of $F(\lambda)$ is represented as the box $[-(i-1),-i] \times [j-1,j]$ in $\R^2$ so that $F(\lambda)$ represents the parts of $\lambda$ as a left-aligned stack of rows of cells in $\R^2$, the $i$th topmost row corresponding to $\lambda(i)$ in that it consists of $\lambda(i)$ cells. Let $r$ be a positive integer and let $p \in (0,1)$.  Let $U_r$ be the random variable that is distributed uniformly on $\{0,1,\ldots r\}$.  Let $\Bin(r,p)$ be the random variable $X$ with $P(X=k) = \binom{n}{k} p^k(1-p)^{n-k}$ for $0 \leq k \leq r$ and $P(X=k) = 0$ for $k > r$.

\begin{corollary} \label{Cor}
Independently fill each cell of the Ferrers diagrams of $\lambda$ and $\mu$ with balls, the number put in each cell drawn from the distribution $U_r$. Then $\lambda \trianglerighteq \mu$ if and only if for all integers $j,t \geq 0$ the probability that the Ferrers diagram for $\lambda$ contains $j$ balls with at most $t$ balls in each row is less than or equal to the corresponding probability for $\mu$.  The same remains true if $U_r$ is replaced by $\Bin(r,p)$.

\end{corollary}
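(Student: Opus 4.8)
The plan is to obtain Corollary \ref{Cor} as a direct specialization of Theorem \ref{T:main1}, taking $X = U_r$ and $X = \Bin(r,p)$ in turn, after checking that each of these variables satisfies the hypotheses of the theorem and that the ball-filling experiment described in the corollary encodes exactly the event appearing in condition $C$.

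First I would reconcile the two descriptions. In the corollary one fills each cell of $F(\lambda)$ independently with a number of balls drawn from $X$. Since row $i$ of $F(\lambda)$ consists of exactly $\lambda(i)$ cells, the total number of balls in row $i$ is a sum of $\lambda(i)$ independent copies of $X$, which is precisely the random variable $Y_i$ in the definition of $\cY(\lambda)$. Consequently the event that $F(\lambda)$ contains $j$ balls in total with at most $t$ balls in each row is exactly $E(\lambda, X, j, t)$, so the inequality asserted in the corollary for all $j, t \geq 0$ is precisely the condition $C(\lambda, \mu, X)$ of (\ref{E:Condprob}). The corollary therefore follows from Theorem \ref{T:main1} once we verify, for each $X \in \{U_r, \Bin(r,p)\}$, that $\range(X) = \{0, 1, \ldots, r\}$ and $X \in TN_2$.

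The range condition is immediate: $P(U_r = k) = 1/(r+1) > 0$ for each $0 \leq k \leq r$, and for $p \in (0,1)$ the weight $\binom{r}{k} p^k (1-p)^{r-k}$ is positive for each $0 \leq k \leq r$. The one step needing a computation is the $TN_2$ property. Here I would use the standard reduction for Toeplitz matrices: the order-$1$ minors of $\cT_p$ are the non-negative probabilities, while every order-$2$ minor has the form $p_\alpha p_\beta - p_{\alpha - d}\, p_{\beta + d}$ with $d > 0$ and equal index sums $\alpha + \beta = (\alpha - d) + (\beta + d)$. Thus $X \in TN_2$ is equivalent to the log-concavity of the sequence $\{P(X = n)\}$ together with the absence of internal zeros in its support.

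It then remains to check log-concavity in the two cases, which is the only place where the two distributions must be treated differently. For $U_r$ the probability sequence is constant on $\{0, \ldots, r\}$ and zero elsewhere, so $P(U_r = k)^2 = P(U_r = k - 1)\, P(U_r = k + 1)$ for interior $k$ and the relevant inequalities hold trivially at the endpoints; hence $U_r \in TN_2$. For $\Bin(r,p)$ the factors $p^k (1-p)^{r-k}$ cancel in the ratio $P(X = k)^2 / (P(X = k-1)\, P(X = k+1))$, reducing the claim to the classical log-concavity of the binomial coefficients; equivalently, the probability generating function $((1-p) + px)^r$ has only real non-positive roots, whence its coefficient sequence is totally non-negative of every order by the classical root criterion, in particular $TN_2$. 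I expect no serious obstacle, the only subtlety worth flagging being that $U_r$ is not real-rooted for $r \geq 2$, so for the uniform case one genuinely needs the direct log-concavity check rather than the root-based argument available for the binomial case. Applying Theorem \ref{T:main1} to each of $U_r$ and $\Bin(r,p)$ then yields both assertions of the corollary.
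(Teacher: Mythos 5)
Your proposal is correct and matches the paper's route exactly: the paper presents Corollary \ref{Cor} as an immediate specialization of Theorem \ref{T:main1} to $X = U_r$ and $X = \Bin(r,p)$, which is precisely what you do, and your identification of the ball-filling event with $E(\lambda,X,j,t)$ is the intended reading. The only content the paper leaves implicit is the verification that $U_r, \Bin(r,p) \in TN_2$ with range $\{0,\ldots,r\}$, and your check via log-concavity with no internal zeros (in effect Lemma \ref{L:char}(ii) together with the ratio argument of Lemma \ref{L:basic}(iv)) supplies this correctly, including the apt caveat that the real-rootedness shortcut works for the binomial but not the uniform case.
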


We prove Theorems \ref{T:main1} and \ref{T:main2} via the case $k=2$ of Theorem \ref{T:TS}, listed below.

Let $p = \{p_n\}_{n=0}^\infty$ be a sequence of real numbers. For all $i,j \in \mathbb{N}$, let $(p^i)_j$ be the coefficient of $x^j$ in $(p(x))^i$ where $p(x) = \sum_{n=0}^\infty p_n x^n$. Here we take $(p(x))^0 = 1$. Let ${\cal S}_p$ be the $\mathbb{N}$ by $\mathbb{N}$ matrix with $({\cal S}_p)_{i,j} = (p^i)_j$ for all $i, j \in \mathbb{N}$. 

\begin{theorem}  \label{T:TS}  Let $k \in \N$ and let $p = \{p_n\}_{n=0}^\infty$ be a sequence of real numbers.  If $\cT_p \in TN_k$, then $\cS_p \in TN_k$. Also, if $\cT_p \in TP_k$, then $\cS_p \in TP_k$.
\end{theorem}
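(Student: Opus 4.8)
The plan is to prove the $TN$ statement by induction on the order $s$ (with $1 \le s \le k$) of the minor of $\cS_p$ under consideration, the engine being the elementary fact that a product of matrices in $TN_k$ is again in $TN_k$. First I would record that fact: if $A,B \in TN_k$ and $I,J$ are index sets with $|I|=|J|=s\le k$, then the Cauchy--Binet formula gives $\det\big((AB)_{I,J}\big)=\sum_{|K|=s}\det(A_{I,K})\det(B_{K,J})$, a sum of products of order-$s$ minors of $A$ and of $B$, each of which is non-negative; hence $AB\in TN_k$. For the infinite Toeplitz matrices in play this causes no convergence trouble, since $p_n=0$ for $n<0$ makes every entry of $\cT_p^{\,i}$ a finite sum and, for fixed finite $I,J$, only finitely many $K$ contribute. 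In particular $\cT_p^{\,i}\in TN_k$ for every $i\in\N$. (When $\range(X)$ is finite, as in the application to Theorems~\ref{T:main1} and \ref{T:main2}, $\cT_p$ is banded and all sums are manifestly finite.)

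The key observation is that right multiplication by $\cT_p$ sends the coefficient sequence of a power series $f(x)$ to that of $f(x)p(x)$: if $f=(f_0,f_1,\dots)$ then $(f\,\cT_p)_j=\sum_m f_m p_{j-m}=[x^j]f(x)p(x)$. Since row $i$ of $\cS_p$ is the coefficient sequence of $p(x)^i$, right multiplication by $\cT_p^{\,i_1}$ carries the coefficient sequence of $p(x)^{d}$ to that of $p(x)^{d+i_1}$. So, fixing a minor of $\cS_p$ with rows $i_1<\dots<i_s$ and columns $J=\{j_1<\dots<j_s\}$ ($s\le k$) and setting $d_a=i_a-i_1$, the $s\times\infty$ matrix $V$ formed by rows $i_1,\dots,i_s$ of $\cS_p$ factors as $V=V'\,\cT_p^{\,i_1}$, where the $a$-th row of $V'$ is the coefficient sequence of $p(x)^{d_a}$, i.e. row $d_a$ of $\cS_p$, with $0=d_1<d_2<\dots<d_s$. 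Writing $V_J$ for the columns-$J$ submatrix, Cauchy--Binet applied to $V=V'\,\cT_p^{\,i_1}$ gives $\det(V_J)=\sum_{|K|=s}\det(V'_K)\,\det\big((\cT_p^{\,i_1})_{K,J}\big)$, in which every factor $\det\big((\cT_p^{\,i_1})_{K,J}\big)$ is an order-$s$ minor of the $TN_k$ matrix $\cT_p^{\,i_1}$, hence non-negative. Thus it suffices to show that each order-$s$ minor $\det(V'_K)$ is non-negative.

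But the first row of $V'$ is the coefficient sequence of $p(x)^0=1$, namely $e_0=(1,0,0,\dots)$. Hence in any order-$s$ minor of $V'$, Laplace expansion along this row shows the minor vanishes unless the column index $0$ is present, in which case it equals the order-$(s-1)$ minor of $\cS_p$ on rows $d_2<\dots<d_s$ and the remaining columns. That is an order-$(s-1)$ minor of $\cS_p$, so the induction hypothesis (valid since $s-1\le k$) gives non-negativity, completing the induction; the base case $s=1$ is just $(\cS_p)_{i,j}=[x^j]p(x)^i\ge 0$, which holds because $\cT_p\in TN_k$ implies $\cT_p \in TN_1$, i.e. $p_n\ge 0$. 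The same scheme yields the $TP_k$ statement once one checks that powers of a matrix in $TP_k$ remain in $TP_k$ and tracks the strict inequalities through the two determinantal identities. I expect the only real subtlety to be the justification of Cauchy--Binet in the infinite-dimensional setting, which is clean here because all summands are non-negative once the inductive hypothesis is in force, and entirely elementary in the finite-range case that the main theorems require. Notably, this route establishes all orders $k$ uniformly and sidesteps the combinatorial conjecture alluded to above, which offers an alternative path to the case $k=2$.
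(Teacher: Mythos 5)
Your proposal is correct, and at its core it runs on the same engine as the paper's proof, though the packaging is genuinely different and arguably cleaner. The paper never invokes Cauchy--Binet by name: working over $\R[[x]]$ with formal derivatives, it uses the generalized Leibniz rule together with multilinearity of the determinant in its columns to derive, by hand, the identity $\det((\cS_p)_{A\times a}) = \sum_{b} \det((\cS_p)_{B\times b})\,\det((\cT_p)_{b\times a})$ with $B=(A_1-1,\ldots,A_\ell-1)$ --- which is precisely the Cauchy--Binet expansion for the factorization ``rows $A$ of $\cS_p$ equal rows $A-1$ of $\cS_p$ times $\cT_p$'', i.e.\ the single-step case $i_1=1$ of your factorization $V = V'\,\cT_p^{\,i_1}$. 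The paper then descends one row-shift at a time via a nested induction on $k$, $\ell$, and $A_1$, until $A_1=0$, where the row $(1,0,0,\ldots)$ collapses the order of the minor exactly as in your Laplace step. You instead descend to row $0$ in one stroke, trading the paper's triple induction for a single induction on the minor order $s$, at the cost of two extra but standard ingredients: closure of $TN_k$ under matrix multiplication and Cauchy--Binet over infinite index sets, both of which you justify correctly, since $p_n=0$ for $n<0$ makes $\cT_p$ and its powers upper triangular, so every entry and every Cauchy--Binet sum is finite. What each route buys: the paper's is self-contained (its determinantal identity is derived from scratch, and no multiplicativity of $TN_k$ is needed), while yours dispenses with the formal-derivative apparatus and makes the structural reason for the theorem transparent. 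One point of parity rather than criticism: your one-sentence dispatch of the $TP_k$ statement matches the paper's equally brief remark; note that under the paper's literal definition the $TP_k$ hypothesis is actually vacuous here, since $\cT_p$ has forced zero entries below the diagonal (e.g.\ $(\cT_p)_{1,0}=p_{-1}=0$) and row $0$ of $\cS_p$ is $(1,0,0,\ldots)$, so a careful positive version would in either write-up require restricting attention to minors that are not trivially zero.
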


The case $k=2$ of Theorem \ref{T:TS} is implied by Conjecture \ref{C:combinatorial}, listed below, a combinatorial conjecture that might be of independent interest. 

If $m$ is a positive integer, let $[m] = \{1,2,\ldots, m\}$.  Let $[0] = \emptyset$.  If $\lambda: [m] \to \N$, we say $\lambda$ is a \emph{composition with $m$ non-negative parts}.  If $\lambda$ is also \emph{non-increasing}, i.e. $\lambda(i) \geq \lambda(j)$ for all integers $i$ and $j$ with $1 \leq i < j \leq m$, then we say $\lambda$ is a \emph{partition with $m$ non-negative parts}.  Let $\cU^m$ (respectively, $\cP^m$) be the sets of compositions (respectively, partitions) with $k$ non-negative parts. If $\lambda \in \cU^m$, let $|\lambda| = \sum_{i \in [m]} \lambda(i)$ be the \emph{weight} of $\lambda$.  If $\lambda \in \cU^m$ and $|\lambda| = n$, we say $\lambda$ is a \emph{composition of $n$}.  If $\lambda \in \cU^m$ and $|\lambda| = n$, we say $\lambda$ is a \emph{partition of $n$}. If $m,n \in \N$, let $\cU^m_n = \{ \lambda \in \cU^m : |\lambda|=n\}$ and $\cP^m_n = \{ \lambda \in \cP^m: |\lambda| =n\}$.

If $\lambda, \mu \in \cU^m$, let $\lambda_p$, $\mu_p$ be the non-increasing rearrangements of $\lambda$ and $\mu$ into partitions.  We define the \emph{dominance} partial order on $\cU^m$ (and $\cP^m$), $\trianglelefteq$, by setting $\lambda \trianglelefteq \mu$ if and only if $\lambda_p \trianglelefteq \mu_p$.

Let $A, a, B, b \in \N$.  If $\lambda \in \cU^A_a$ and $\mu \in \cU^B_b$, let $\lambda\mu \in \cU^{A+B}_{a+b}$ be the \emph{concatenation of $\lambda$ and $\mu$}, defined by setting $(\lambda\mu)(i) = \lambda(i)$ for $i \in [A]$ and $(\lambda\mu)(i) = \mu(i - A)$ for $i \in [A+B] \setminus [A]$.

\begin{conjecture}  \label{C:combinatorial} For all integer $A,a,B,b$ with $A \geq B \geq 0$ and $a \geq b \geq 0$ there is an injection $\gamma: \cU^A_b \times \cU^B_a \hookrightarrow \cU^A_a \times \cU^B_b$ such that for all $(\lambda_1,\mu_1) \in \cU^A_b \times \cU^B_a$, $(\lambda_2,\mu_2) = \gamma((\lambda_1,\mu_1)) \in \cU^A_a \times \cU^B_b$ satisfies $\lambda_1 \mu_1 \trianglerighteq \lambda_2 \mu_2$.
\end{conjecture}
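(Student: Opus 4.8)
The plan is to recast the existence of $\gamma$ as a bipartite matching problem and attack it through the defect (Hall) form of the marriage theorem. First I would record that whether $\lambda_1\mu_1\trianglerighteq\lambda_2\mu_2$ depends only on the non-increasing rearrangements, i.e. only on the partitions $(\lambda_1\mu_1)_p,(\lambda_2\mu_2)_p\in\cP^{A+B}_{a+b}$. So I bucket both sides by type: for $\nu\in\cP^{A+B}_{a+b}$ set
\[
L(\nu)=\#\{(\lambda_1,\mu_1)\in\cU^A_b\times\cU^B_a:(\lambda_1\mu_1)_p=\nu\},\qquad R(\nu)=\#\{(\lambda_2,\mu_2)\in\cU^A_a\times\cU^B_b:(\lambda_2\mu_2)_p=\nu\}.
\]
Form the bipartite graph on $\cU^A_b\times\cU^B_a$ and $\cU^A_a\times\cU^B_b$ with an edge exactly when $\lambda_1\mu_1\trianglerighteq\lambda_2\mu_2$; a matching saturating the left side is precisely the desired $\gamma$. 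Since the neighbourhood of a left vertex depends only on its type and is a down-set in $(\cP^{A+B}_{a+b},\trianglelefteq)$, a standard reduction shows Hall's condition is equivalent to the family of inequalities
\[
(\star)\qquad \sum_{\nu\in I}R(\nu)\ \ge\ \sum_{\nu\in I}L(\nu)\qquad\text{for every order ideal }I\subseteq(\cP^{A+B}_{a+b},\trianglelefteq).
\]
Two consistency checks come for free: taking $I$ to be the whole poset gives the cardinality inequality $\binom{b+A-1}{A-1}\binom{a+B-1}{B-1}\le\binom{a+A-1}{A-1}\binom{b+B-1}{B-1}$, which holds because $\prod_{i=1}^{A-1}\tfrac{a+i}{b+i}\ge\prod_{i=1}^{B-1}\tfrac{a+i}{b+i}$ when $a\ge b$ and $A\ge B$; and when $A=B$ the block-swap $(\lambda_1,\mu_1)\mapsto(\mu_1,\lambda_1)$ already realises $\gamma$ with equality, so $L=R$ and $(\star)$ is trivial.

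The whole conjecture is thus equivalent to $(\star)$, and this is where the work lies. Writing $N_{s,u}(\nu)=\#\{(\lambda,\mu)\in\cU^A_s\times\cU^B_u:(\lambda\mu)_p=\nu\}$, we have $R=N_{a,b}$ and $L=N_{b,a}$, so $(\star)$ asserts that moving the surplus $a-b$ off the short block (length $B$) onto the long block (length $A$) pushes the induced distribution on types downward in dominance, placing at least as much mass in every ideal. The cleanest way I would try to certify this is to exhibit $R-L$ as a non-negative integer combination of elementary downward differences $\mathbf 1_{\rho}-\mathbf 1_{\nu}$ with $\nu\trianglerighteq\rho$: for any ideal $I$ such a term contributes $[\rho\in I]-[\nu\in I]\ge0$, so $(\star)$ would follow, and a sufficiently explicit (flow-based) version of the decomposition would even furnish $\gamma$ directly. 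Concretely this amounts to building explicit weight-transfer moves that carry a length-$B$-heavy pair to a length-$A$-heavy pair while only flattening the combined partition.

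The natural source of such moves, and the reason the statement is free of $p$, is the non-intersecting lattice-path model underlying $\cS_p$. Reading $(p^i)_j=\sum_{n_1+\cdots+n_i=j}p_{n_1}\cdots p_{n_i}$ as the weighted count of paths $(0,0)\to(i,j)$ with steps $(1,n)$ of weight $p_n$, a pair of paths is exactly a pair of compositions $(\lambda,\mu)$ carrying weight $p_{\lambda\mu}:=\prod_i p_{(\lambda\mu)(i)}$, and the order-$2$ minor $(\cS_p)_{B,b}(\cS_p)_{A,a}-(\cS_p)_{B,a}(\cS_p)_{A,b}$ is the signed count of such pairs. Because a $TN_2$ (log-concave) weight reverses dominance, $\nu\trianglerighteq\rho\Rightarrow p_\nu\le p_\rho$, an injection that only flattens types certifies the minor's non-negativity simultaneously for every admissible $p$; this is exactly the content demanded of $\gamma$. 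I expect the main obstacle to be precisely the point where the classical Lindstr\"om--Gessel--Viennot involution breaks down: resolving a crossing by swapping tails preserves the combined multiset of steps but reallocates how many steps belong to each path, so it generically leaves the spaces $\cU^A\times\cU^B$ with the prescribed block lengths $A\ge B$. Forcing the image back into $\cU^A_a\times\cU^B_b$ is what makes a type-preserving matching impossible (indeed $L(\nu)>R(\nu)$ for some $\nu$) and compels a genuinely dominance-decreasing correction. Designing that correction so that it is at once injective, block-length-correct, and dominance-monotone, and then bookkeeping it across the notoriously intricate order ideals of the dominance lattice, is the crux I would have to resolve.

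Failing a single global construction, I would fall back on induction, with base cases $A=B$ (the swap bijection above) and $B=0$ (trivial, as $\cU^0_a\neq\emptyset$ only when $a=0$), attempting to peel one part off the long block to pass from parameter $A$ to $A-1$. The difficulty there is that the sums attached to the peeled parts differ on the two sides, $a$ versus $b$, so coordinating the removed parts while simultaneously preserving injectivity and the inequality $\lambda_1\mu_1\trianglerighteq\lambda_2\mu_2$ is delicate. In every variant, however, the decisive step is $(\star)$, to which the entire conjecture is equivalent, so I would concentrate all effort on establishing that family of ideal-sum inequalities.
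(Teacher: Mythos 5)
You should know at the outset that the paper does not prove this statement at all: it is stated as Conjecture \ref{C:combinatorial}, explicitly left open, and the paper only shows that it \emph{would imply} the case $k=2$ of Theorem \ref{T:TS} (which the paper instead proves directly by a formal-derivative induction). So there is no proof in the paper to match yours against, and the only question is whether your proposal closes the conjecture on its own. It does not. What you establish correctly is a reduction, not a proof: the bipartite graph with edges $\lambda_1\mu_1 \trianglerighteq \lambda_2\mu_2$, the observation that each left vertex's neighbourhood is the principal down-set of its type $(\lambda_1\mu_1)_p \in \cP^{A+B}_{a+b}$, and the consequent collapse of Hall's condition to the ideal-sum inequalities $(\star)$ are all sound, as are your two sanity checks (the binomial inequality via $\prod_{i=1}^{A-1}\frac{a+i}{b+i} \geq \prod_{i=1}^{B-1}\frac{a+i}{b+i}$, and the swap bijection when $A=B$). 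But $(\star)$ is, as you yourself say, equivalent to the conjecture, and you never prove it; the proposed certificate (writing $R-L$ as a non-negative combination of differences $\mathbf{1}_\rho - \mathbf{1}_\nu$ with $\nu \trianglerighteq \rho$), the flow-based construction of $\gamma$, the repaired lattice-path involution, and the induction peeling $A$ down to $A-1$ are each flagged as open at their decisive step. Replacing one open combinatorial statement by an equivalent one is honest groundwork, but the gap is the entire content of the conjecture.

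Two further cautions. First, your side assertion that a type-preserving matching is impossible, i.e.\ that $L(\nu) > R(\nu)$ for some $\nu$, is stated without proof; it is only motivational here, but if you pursue the decomposition route you will need to understand exactly where and by how much $L$ exceeds $R$, so it should not remain an assertion. Second, be aware that the analytic consequences of the conjecture are already theorems: the inequality $(p^A)_b (p^B)_a \leq (p^A)_a (p^B)_b$ for every $TN_2$ sequence $p$ follows from the paper's unconditional proof of Theorem \ref{T:TS}, so no amount of verifying weighted corollaries (your lattice-path discussion, or testing $(\star)$ against particular ideals such as the whole poset) can distinguish your target from what is known. The genuine surplus of the conjecture over Theorem \ref{T:TS} is precisely the uniform combinatorial witness $\gamma$, valid simultaneously for all admissible weights, and that is exactly the object your proposal does not construct.
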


In Section 2, we give two useful characterizations of $TN_2$ in Lemma \ref{L:char} and prove Lemma \ref{L:basic} which states a number of basic results on how the properties $TN_2$, non-negativity, positivity, unimodality and log-concavity of a sequence $p$ relate to one another.  In Section 3, we will prove Theorem \ref{T:TS} and discuss how Conjecture \ref{C:combinatorial} implies the case $k=2$ of this theorem. In Section 4, we will use this case to prove Theorems \ref{T:main1} and \ref{T:main2}.  In Section 5, we record some observations on the roles of the assumptions in Theorem \ref{T:main1}.

\section{Basic Results on $TN_2$} \label{S:TN2}

Let $p = \{p_n\}_{n=0}^\infty$ be a sequence of real numbers.  We say $p$ is \emph{non-negative} (respectively, \emph{positive}) if $p_n \geq 0$ (respectively, $p_n >0$) for all $n \in \N$. If $\lambda \in \cP^m$, let $p_\lambda =  \prod_{i=1}^m p_{\lambda(i)}$.  

\begin{lemma} \label{L:char} Let $p = \{p_n\}_{n=0}^\infty$ be a sequence of real numbers.  Then the following statements are equivalent.

\begin{enumerate}[(i)]

\item $p \in TN_2$

\item $p$ is non-negative and $p_a p_d \leq p_b p_c$ for all integers $a,b,c,d$ with $a \geq b \geq c \geq d \geq 0$ and $a+d = b+c$.

\item $p$ is non-negative and for all positive integers $m$, if $\lambda, \mu \in \cP^m$ and $\lambda \trianglerighteq \mu$, then $p_\lambda \leq p_\mu$.

\end{enumerate}

\end{lemma}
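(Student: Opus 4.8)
The plan is to prove the chain of equivalences by treating (i)$\Leftrightarrow$(ii) as a direct translation between the minors of $\cT_p$ and the inequalities in (ii), and treating (ii)$\Leftrightarrow$(iii) as the substantive part. Of the latter, (iii)$\Rightarrow$(ii) is the trivial specialization to $\cP^2$: given integers $a \ge b \ge c \ge d \ge 0$ with $a+d = b+c$, the pairs $\lambda = (a,d)$ and $\mu = (b,c)$ lie in $\cP^2$ and satisfy $\lambda \trianglerighteq \mu$ (equal weight and $a \ge b$), so (iii) yields $p_a p_d = p_\lambda \le p_\mu = p_b p_c$, while non-negativity is part of (iii) verbatim. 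The real work is in (ii)$\Rightarrow$(iii).

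For (i)$\Leftrightarrow$(ii), first note that the $1\times 1$ minors of $\cT_p$ are exactly the entries $p_{j-i}$; as $j-i$ ranges over all integers and $p_n = 0$ for $n<0$, these are all non-negative if and only if $p$ is non-negative. For a general $2\times 2$ minor on rows $i_1 < i_2$ and columns $j_1 < j_2$, a short computation gives the value $p_B p_C - p_A p_D$, where $A = j_2 - i_1$, $D = j_1 - i_2$, $B = j_1 - i_1$, $C = j_2 - i_2$. One checks that $A \ge B,C \ge D$ and $A + D = B + C$, so $A$ and $D$ are the extreme indices; hence the minor is non-negative iff $p_A p_D \le p_B p_C$, which is exactly an instance of (ii) when $D \ge 0$ (after ordering $\{B,C\}$) and is automatic from non-negativity when $D < 0$ (then $p_D = 0$). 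Conversely, given $a \ge b \ge c \ge d \ge 0$ with $a+d=b+c$, the rows $i_1 = 0$, $i_2 = a-c$ and columns $j_1 = b$, $j_2 = a$ realize $p_b p_c - p_a p_d$ as a $2\times 2$ minor, with the degenerate cases $a = b$ or $a = c$ forcing the inequality as a literal equality. This direction is routine index bookkeeping.

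For (ii)$\Rightarrow$(iii), I would induct on the order-reversing potential $\Phi(\lambda) = \sum_{k \ge 0}(n - S_k)$, where $S_k = \sum_{i \le k}\lambda(i)$; since $\lambda \trianglerighteq \mu$ gives $S_k \ge T_k$ (writing $T_k = \sum_{i\le k}\mu(i)$), we have $\Phi(\lambda) \le \Phi(\mu)$ with equality iff $\lambda = \mu$. The induction is on $\Phi(\mu) - \Phi(\lambda) \ge 0$, the base case $\lambda = \mu$ being trivial. The inductive step rests on a combinatorial lemma: if $\lambda,\mu \in \cP^m$ have equal weight, $\lambda \trianglerighteq \mu$, and $\lambda \ne \mu$, then there are indices $i < j$ with $\lambda(i) > \lambda(j)$ such that the array $\nu$ agreeing with $\lambda$ except that $\nu(i) = \lambda(i) - 1$ and $\nu(j) = \lambda(j) + 1$ is again a partition in $\cP^m$ with $\lambda \trianglerighteq \nu \trianglerighteq \mu$. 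Granting this, I apply (ii) to $(a,b,c,d) = (\lambda(i),\lambda(i)-1,\lambda(j)+1,\lambda(j))$, which satisfy $a \ge b \ge c \ge d \ge 0$ and $a+d=b+c$ when $\lambda(i) \ge \lambda(j)+2$ (and give a literal equality of products when $\lambda(i) = \lambda(j)+1$); this yields $p_{\lambda(i)} p_{\lambda(j)} \le p_{\nu(i)} p_{\nu(j)}$, and multiplying by the common non-negative factor $\prod_{k \ne i,j} p_{\lambda(k)}$ gives $p_\lambda \le p_\nu$. Since moving a box downward strictly increases $\Phi$ while $\nu \trianglerighteq \mu$, the inductive hypothesis gives $p_\nu \le p_\mu$, and chaining proves $p_\lambda \le p_\mu$.

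The main obstacle is the combinatorial lemma, which amounts to Brylawski's description of the covering relations of the dominance lattice; I would prove it constructively. Let $i_0$ be the least index with $\lambda(i_0) > \mu(i_0)$ (such an index exists and carries $\lambda$ strictly larger, since the first coordinate where the two differ does), and let $i$ be the bottom of the maximal block of parts of $\lambda$ equal to $\lambda(i_0)$, so $\lambda(i) > \lambda(i+1)$ and decrementing row $i$ preserves the shape. Because $\mu(k) \le \mu(i_0) < \lambda(i_0)$ throughout that block, there is strict surplus $S_i \ge T_i + 1$, and since the weights agree there is a least index $j > i$ with $\lambda(j) < \mu(j)$. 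Minimality of $j$ forces $\lambda(j-1) > \lambda(j)$ (otherwise $\lambda(j-1) = \lambda(j) < \mu(j) \le \mu(j-1)$ would make $j-1$ deficient, with the case $j = i+1$ handled directly by $\lambda(i) > \lambda(i+1)$), so incrementing row $j$ also preserves the shape, and $\lambda(i) > \lambda(j)$ follows from $\lambda(j) \le \lambda(i+1) < \lambda(i)$. Finally $\lambda \trianglerighteq \nu$ is clear since $\nu$ moves one box downward, while $\nu \trianglerighteq \mu$ reduces to $S_k - 1 \ge T_k$ for $i \le k < j$; there $S_k \ge (T_i + 1) + \sum_{l=i+1}^{k}\mu(l) = T_k + 1$ because rows $i+1,\dots,j-1$ carry no deficiency. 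Carrying the partition-preservation and the two dominance inequalities simultaneously is the delicate point; once the lemma is in hand the induction closes and all three statements are equivalent.
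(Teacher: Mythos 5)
Your proposal is correct. The parts (i)$\Leftrightarrow$(ii) and (iii)$\Rightarrow$(ii) match the paper's argument essentially verbatim: the same translation of $2\times 2$ minors of $\cT_p$ into the four-index inequality (including the $d<0$ case killed by non-negativity, and the degenerate $a=b$ case), and the same specialization of (iii) to $m=2$. Where you genuinely diverge is (ii)$\Rightarrow$(iii). The paper reduces to covering relations: since $\cP^m_n$ is finite it suffices to prove $p_\lambda \leq p_\mu$ when $\lambda$ covers $\mu$ in dominance order, and it then quotes, without proof, the standard characterization of covers (its Lemma~\ref{L:cover}, cited from James--Kerber) to extract the two indices $i<j$ to which (ii) is applied. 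You instead prove from scratch a one-box transfer lemma in the style of Hardy--Littlewood--P\'olya/Muirhead --- any $\lambda \trianglerighteq \mu$ with $\lambda \neq \mu$ admits an intermediate $\nu \in \cP^m$, obtained by moving a single box from row $i$ to row $j$, with $\lambda \trianglerighteq \nu \trianglerighteq \mu$ --- and close the induction with the explicit potential $\Phi$. Your interpolating steps need not be covers, which is fine since only a chain from $\mu$ to $\lambda$ is needed; this makes your proof self-contained (no appeal to Brylawski's description of the dominance covers) at the cost of the constructive case analysis, whereas the paper buys brevity by citation. One spot you should patch, in the ``delicate point'' you yourself flag: when $j = i+1$ you only verify $\lambda(i) > \lambda(i+1)$, but for $\nu$ to remain non-increasing with both modifications at adjacent rows you need the stronger $\lambda(i) \geq \lambda(j)+2$ (which is also exactly the condition $b \geq c$ in your application of (ii)). This does follow from inequalities already present in your construction: $i$ lies in the block where $\lambda(i) > \mu(i)$, and $\mu(i) \geq \mu(j) \geq \lambda(j)+1$, hence $\lambda(i) \geq \lambda(j)+2$; a similar chain using $\lambda(i+1) \geq \lambda(j-1) > \lambda(j)$ disposes of $j > i+1$. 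In particular your hedge about a ``literal equality of products when $\lambda(i) = \lambda(j)+1$'' is vacuous --- that case never arises in your construction, which is fortunate, since for $j=i+1$ it would have produced a $\nu$ that is not non-increasing --- but the step deserves to be stated explicitly.
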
 

In order to prove this lemma, we need some basic results on the cover relation in the dominance order.

Let $(P,\leq)$ be a partially ordered set.  If $a,b \in P$ we say \emph{$a$ is covered by $b$} or, equivalently, \emph{$b$ covers $a$} if $a \leq b$ and $\{x \in P : a \leq x \leq b\} = \{a,b\}$.  The following lemma, stated without proof, is a standard characterization of the cover relation $\triangleleft \cdot$ corresponding to the dominance order $\trianglelefteq$ on $\cP$.

\begin{lemma}(See \cite{James:1981aa}, 1.4.21, p.28) \label{L:cover}
Let $\lambda, \mu$ be partitions.  Then $\lambda \cdot \triangleright \mu$ if and only if there exist integers $i,j$ with $j > i \geq 1$ such that (a), $\lambda(j) = \mu(j)-1$ and $\lambda(i) = \mu(i)+1$, while for $\nu \not \in \{i,j\}$ we have $\lambda(\nu)=\mu(\nu)$, and (b), $i = j-1$ or $\mu(j) = \mu(i)$.
\end{lemma}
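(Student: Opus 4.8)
The plan is to prove both implications directly from the definition of the cover relation, using the partial-sum functions $S_\lambda(k) = \sum_{i=1}^k \lambda(i)$, for which $\lambda \trianglerighteq \mu$ means exactly that $S_\lambda(k) \geq S_\mu(k)$ for all $k$ while $S_\lambda(K) = S_\mu(K)$ for large $K$. Throughout, a single \emph{box move} means passing from a partition $\mu$ to $\mu + e_i - e_j$ (add one cell to row $i$, remove one from row $j$, with $i < j$); its effect is to raise $S_\mu$ by $1$ on $\{i, i+1, \ldots, j-1\}$ and leave it unchanged elsewhere, and it yields a partition precisely when $\mu(i-1) > \mu(i)$ and $\mu(j) > \mu(j+1)$. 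Condition (a) says $\lambda$ is obtained from $\mu$ by one box move, and (b) says that move is as ``short'' as possible.

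For the ``only if'' direction, suppose $\lambda \cdot\triangleright \mu$. To recover (a) I would exhibit one box move strictly above $\mu$ and weakly below $\lambda$, which the cover hypothesis forces to equal $\lambda$. Let $f(k) = S_\lambda(k) - S_\mu(k) \geq 0$, let $i$ be the least index with $f(i) > 0$ (so $\lambda$ and $\mu$ agree below $i$ and $\mu(i-1) = \lambda(i-1) \geq \lambda(i) > \mu(i)$), and let $j$ be the least index exceeding $i$ at which $\mu$ strictly drops, $\mu(j) > \mu(j+1)$. The key verification is that $j$ occurs no later than the first index $j^\ast > i$ where $f$ returns to $0$: otherwise $\mu$ is constant on $[i+1, j^\ast]$, and then $\lambda(j^\ast+1) \leq \lambda(j^\ast) < \mu(j^\ast) = \mu(j^\ast+1)$ forces $f(j^\ast+1) < 0$, impossible. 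Hence $f \geq 1$ throughout $[i, j-1]$, so $\nu = \mu + e_i - e_j$ is a partition with $\mu \triangleleft \nu \trianglelefteq \lambda$; the cover forces $\nu = \lambda$, giving (a). For (b) I argue the contrapositive: assuming (a) with $i < j-1$ and $\mu(i) > \mu(j)$, the strict drop of $\mu$ inside $[i,j]$ lets me split the move into a strictly shorter valid one. Concretely, if $\mu$ drops at some $m \in [i+1, j-1]$ take $\nu = \mu + e_i - e_m$, and otherwise (so $\mu(i) > \mu(i+1) = \cdots = \mu(j)$) take $\nu = \mu + e_{i+1} - e_j$; in each case $\nu$ is a partition and the interval on which its partial sums exceed those of $\mu$ is a nonempty proper subinterval of $[i,j-1]$, so $\mu \triangleleft \nu \triangleleft \lambda$, contradicting the cover.

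For the ``if'' direction I assume (a) and (b) and must show no partition lies strictly between. Writing $g = S_\nu - S_\mu$ for a putative intermediate $\nu$, the function $g$ is $\{0,1\}$-valued, vanishes off $[i, j-1]$, and, since $\nu \neq \mu, \lambda$, takes both values on $[i,j-1]$. When $i = j-1$ the set $[i,j-1]$ is a single point, so $g$ cannot take both values, an immediate contradiction. The substantive case is $\mu(i) = \mu(j)$, where $\mu$ is constant, say equal to $c$, on all of $[i,j]$. Letting $a$ be the first index where $g$ rises to $1$ and $b > a$ the first where it falls back to $0$, the partial-sum bookkeeping gives $\nu(a) = c+1$, $\nu(b) = c-1$, and $\nu(k) = c$ for $a < k < b$. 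Here lies the crux of the lemma, and the step I expect to be the main obstacle: turning the monotonicity of $\nu$ into a contradiction, which is exactly where the partition hypothesis (not mere dominance of partial sums) is used. I would do this by averaging: since $\nu$ is non-increasing with $\nu(b) = c-1$, every part from $b$ on is at most $c-1$, so $\sum_{k=b}^{j}\nu(k) \leq (j-b+1)(c-1)$; but bookkeeping forces $\sum_{k=b}^{j}\nu(k) = (j-b+1)c - 1$, and together these give $b = j$. The mirror argument using $\nu(a) = c+1$ gives $a = i$. Then $g$ equals $1$ on all of $[i,j-1]$, i.e. $\nu = \lambda$, the desired contradiction, so $\lambda$ covers $\mu$.
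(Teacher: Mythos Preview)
Your proof is correct. The paper does not actually prove Lemma~\ref{L:cover}; it is explicitly ``stated without proof'' as a standard fact, with a citation to James--Kerber. So there is no in-paper argument to compare yours against. Your partial-sum approach is a clean, self-contained verification: in the ``only if'' direction the choice of $i$ as the first index with $f>0$ and $j$ as the first subsequent strict drop of $\mu$, together with the check $j\le j^\ast$, correctly produces a single admissible box move sandwiched between $\mu$ and $\lambda$, which the cover hypothesis then pins down; the contrapositive for~(b) via a strictly shorter box move is standard and correctly executed in both subcases. In the ``if'' direction, your averaging argument (forcing $b=j$ from $\nu(b)=c-1$ and $a=i$ from $\nu(a)=c+1$) is exactly the point where the hypothesis that $\nu$ is a \emph{partition}, not just a sequence of compatible partial sums, gets used, and it closes the case $\mu(i)=\mu(j)$ cleanly. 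A couple of small existence checks you left implicit (that $j$ exists because $\mu$ cannot vanish on all of $(i,\infty)$ when $|\lambda|=|\mu|$, and that $b\le j$ because $g(j)=0$) go through without difficulty.
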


Let $A$ be an $\N$ by $\N$ matrix.  If $x,y,z,w \in \N$ with $x <y$ and $z < w$, let $A_{\{x,y\} \times \{z,w\}}$ be the $2$ by $2$ submatrix of $A$ whose rows are indexed by $x$ and $y$ and whose columns are indexed by $z$ and $w$. We define the following $2$ by $2$ minor of $\cT_p$, $M_{\{x,y\} \times \{z,w\}} = \det((\cT_p)_{\{x,y\} \times \{z,w\}})$.

\begin{proofof}{Lemma \ref{L:char}}  We will first show that (i) and (ii) are equivalent.

Suppose we have (i).  Suppose $a,b,c,d \in \N$, $a \geq b \geq c \geq d \geq0$ and $a+d = b+c$.  If $a=b$ then $c=d$ and $p_b p_c - p_a p_d =0$.  Suppose $a > b$.  Let $x =0, y=a-b, z = c, w = a$.  Then $0 \leq x < y$ and $0 \leq z < w$ and $p_b p_c - p_a p_d = M_{\{x,y\} \times \{z,w\}} \geq 0$.  Thus we have (ii).

Now we assume (ii). If $x,y,z,w \in \N$ with $x <y$ and $z < w$, $M_{\{x,y\} \times \{z,w\}} = p_b p_c - p_a p_d$ where $a = w-x , b = z-x, c = w - y , d = z-y$, Note $a > b,c > d$, and $a+d = b+c$.  If $d <0$ then $p_d = 0$ and $M_{\{x,y\} \times \{z,w\}} = p_b p_c \geq 0$.  If $d \geq 0$ then $M_{\{x,y\} \times \{z,w\}} = p_b p_c - p_a p_d \geq 0$.  Thus we have (i).  Note that the non-negativity of $p$ is necessary: if $p_n = (-2)^n$ then the second condition of (ii) holds but $p \not \in TN_2$.

The condition in (iii) for $m=2$ is the condition in (ii), thus (iii) implies (ii). Now we assume (ii).  This implies the cases $m=1$ and $m=2$ of (iii) are true.  We now assume $m\geq 3$.  Since $\lambda \trianglerighteq \mu$ implies $|\lambda| = |\mu|$ and since $\cP^m_n$ is finite, we need only show $p_\lambda \leq p_\mu$ if $\lambda \cdot \triangleright \mu$.

Let $j > i \geq 1$ be the indices witnessing $\lambda \cdot \triangleright \mu$, i.e. those satisfying (a) and (b) of Lemma \ref{L:cover}.  Since (a) implies $\lambda(i) > \mu(i) \geq \mu(j) > \lambda(j) \geq 0$ and $\mu(i) + \mu(j) = \lambda(i) + \lambda(j)$, (ii) implies $p_{\lambda(i)}p_{\lambda(j)} \leq p_{\mu(i)}p_{\mu(j)}$. Thus $p_{\lambda} = (\prod_{\nu \not \in \{i,j\}} p_{\lambda(\nu)}) p_{\lambda(i)}p_{\lambda(j)} = (\prod_{\nu \not \in \{i,j\}} p_{\mu(\nu)}) p_{\lambda(i)}p_{\lambda(j)} \leq (\prod_{\nu \not \in \{i,j\}} p_{\mu(\nu)}) p_{\mu(i)}p_{\mu(j)} = p_{\mu}$.  The second equality holds by (a) of Lemma \ref{L:cover} while the inequality holds by the non-negativity of $p$.  Thus we have (iii).
\end{proofof}

We say that $p$ is \emph{unimodal} if there is a $k \in \N$ such that $p_i \leq p_j \leq p_k \geq p_l \geq p_m$ for all $i,j,l,m \in \N$ with $i\leq j \leq k \leq l \leq m$.  Alternatively, $p$ is unimodal if and only if there are no $i, j, k \geq 0$ such that $i < j < k$ and $p_i > p_j< p_k$.  We say say that $p$ is \emph{log-concave} (respectively, \emph{strictly log-concave}) if and only if $p_k^2  \geq p_{k+1}p_{k-1}$ (respectively, $p_k^2 > p_{k+1}p_{k-1}$) for all positive integers $k$.  We say that $p$ is \emph{$k$-non-negative} (respectively, \emph{$k$-positive)} if and only if $\cT_p$ has all minors of order $k$ non-negative (respectively, positive).

\begin{lemma} \label{L:basic} Let $p = \{p_n\}_{n=0}^\infty$ be a sequence of real numbers.  Then the following statements hold.

\begin{enumerate}[(i)]

\item If $p$ is $2$-non-negative, then $p$ is log-concave. If $p$ is $2$-positive, then $p$ is strictly log-concave.

\item If $p \in TN_2$ then $p$ is unimodal.

\item Suppose $p$ is non-negative.  If $p$ is unimodal or log-concave then $p$ is not necessarily $TN_2$.

\item Suppose $p$ is positive.  If $p$ is log-concave, then $p$ is $TN_2$.  If $p$ is strictly log-concave, then $p$ is $TP_2$.

\end{enumerate}

\end{lemma}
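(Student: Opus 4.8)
The plan is to handle the four parts in order, proving~(i) first and reusing it, with condition~(ii) of Lemma~\ref{L:char} as the workhorse throughout. For~(i) I would name a single explicit minor: the order-$2$ minor of $\cT_p$ on rows $\{0,1\}$ and columns $\{k,k+1\}$ equals
\[ (\cT_p)_{0,k}(\cT_p)_{1,k+1}-(\cT_p)_{0,k+1}(\cT_p)_{1,k}=p_k^2-p_{k-1}p_{k+1} \]
for every integer $k\ge 1$. Its non-negativity (for $2$-non-negative $p$) is exactly log-concavity at $k$, and its positivity (for $2$-positive $p$) is strict log-concavity; this disposes of~(i) with no further work.

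Part~(ii) is the crux and, I expect, the main obstacle, because log-concavity by itself does \emph{not} give unimodality: for instance $p=(1,0,0,1,0,\dots)$ is non-negative and log-concave yet has the valley $p_0>p_1<p_3$, so the proof must exploit the non-adjacent minors of $TN_2$ and not merely part~(i). I would argue in two steps. First, the support of $p$ is an interval: if $0\le l<n<m$ with $p_l,p_m>0$, apply condition~(ii) of Lemma~\ref{L:char} to $a=m$, $d=l$, and $\{b,c\}=\{n,\,l+m-n\}$ ordered so that $b\ge c$; since $l<n<m$ also gives $l<l+m-n<m$, we have $m\ge b\ge c\ge l$ and $a+d=b+c$, whence $p_mp_l\le p_bp_c=p_np_{l+m-n}$, and $p_mp_l>0$ forces $p_n>0$. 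Second, on this support interval every entry is positive and, by~(i), the ratios $p_k/p_{k-1}$ are non-increasing, so the positive block rises and then falls; since $p$ vanishes off the interval, any putative valley $p_i>p_j<p_k$ would force $j$ and then $i,k$ into the support, contradicting the monotonicity of the ratios. Hence $p$ is unimodal.

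For~(iii) I would exhibit one counterexample per hypothesis. The sequence $(1,1,3,1,1,0,\dots)$ is non-negative and unimodal but fails log-concavity at $k=1$ (there $1=p_1^2<p_0p_2=3$), so by~(i) it is not $TN_2$. The sequence $(1,0,0,1,0,\dots)$ is non-negative and log-concave but violates condition~(ii) of Lemma~\ref{L:char} at $a=3,b=2,c=1,d=0$ (there $p_3p_0=1>0=p_2p_1$), so it too fails $TN_2$.

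For~(iv) I would again use condition~(ii) of Lemma~\ref{L:char}. Given $p$ positive and log-concave, fix $a\ge b\ge c\ge d\ge 0$ with $a+d=b+c$ and put $s=a-b=c-d\ge 0$. Writing each side as a telescoping product of the ratios $\rho_j=p_j/p_{j-1}$, which are non-increasing by log-concavity, gives $p_a/p_b=\prod_{t=1}^s\rho_{b+t}\le\prod_{t=1}^s\rho_{d+t}=p_c/p_d$ since $b\ge d$; clearing denominators (legitimate as $p>0$) yields $p_ap_d\le p_bp_c$, so $p\in TN_2$. For the strict statement, a genuine (not structurally-zero) order-$2$ minor of $\cT_p$ corresponds to $a>b\ge c>d$, so $s\ge 1$ and $b>d$; strict log-concavity makes the $\rho_j$ strictly decreasing, the $t=1$ factor is then strict, and $p_ap_d<p_bp_c$, giving $p\in TP_2$.
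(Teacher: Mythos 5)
Your proposal is correct, and on three of the four parts it essentially coincides with the paper's proof: for (i) you compute exactly the paper's minor $M_{\{0,1\}\times\{k,k+1\}}=p_k^2-p_{k-1}p_{k+1}$; for (iii) your examples are of the same type as the paper's (it uses $p_n=2^n+1$ for unimodal-not-log-concave and $(1,0,0,1,1,\ldots)$ for log-concave-not-$2$-non-negative); and for (iv) you run the same telescoping-ratio argument, merely pairing the telescopes as $p_a/p_b\le p_c/p_d$ with $s=a-b$ where the paper writes $p_b/p_d\ge p_a/p_c$ with $t=a-c$. Where you genuinely diverge is (ii). The paper argues by a direct extremal-index construction: given a valley $p_i>p_j<p_k$, it takes $d$ to be the last index before $j$ with $p_d\ge p_i$ and $a$ the first index after $j$ with $p_a\ge p_k$, sets $b=a-1$, $c=d+1$, and checks $p_c<p_d$, $p_b<p_a$, $p_ap_d>0$, contradicting Lemma \ref{L:char}(ii) in one stroke. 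You instead first prove that the support of a $TN_2$ sequence is an interval (a clean application of Lemma \ref{L:char}(ii) with $a=m$, $d=l$, $\{b,c\}=\{n,l+m-n\}$) and then exploit monotonicity of the ratios $p_t/p_{t-1}$ on the positive block. Both arguments are sound; yours factors the proof through a reusable structural fact --- $TN_2$ is equivalent, for non-negative sequences, to log-concavity plus interval support, the classical picture of $PF_2$ sequences --- and it avoids the somewhat delicate index bookkeeping of the paper's construction, at the cost of an extra step. Your opening remark that log-concavity alone does not yield unimodality (your $(1,0,0,1,0,\ldots)$ example) correctly identifies why (i) cannot simply be cited here, which is precisely the trap both proofs must avoid. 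One caveat applies to you and the paper equally: under the paper's literal definition of $TP_2$, the matrix $\cT_p$ always has structurally zero entries $p_{j-i}$ with $j<i$, so the $TP_2$ claim in (iv) must be read as excluding such forced-zero minors; you flag this explicitly with your ``genuine minor'' restriction, while the paper passes over it with ``analogous.'' Within that convention, your strict case should also dispose of the genuine minors with $d<0$ but $b,c\ge 0$, where $p_ap_d=0<p_bp_c$ by positivity alone; this is immediate, so it is a presentational omission rather than a gap.
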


\begin{proofof}{Lemma}

We prove (i).  Suppose $p$ is $2$-non-negative.  Let $k$ be a positive integer. Then $p_k^2 - p_{k+1}p_{k-1} = M_{\{0,1\} \times\{k,k+1\}} \geq 0$ and thus $p$ is log-concave.  If $p$ is $2$-positive $p_k^2 - p_{k+1}p_{k-1} = M_{\{0,1\} \times\{k,k+1\}} > 0$ and $p$ is strictly log-concave.

We now prove (ii).  Suppose that $p \in TN_2$. Suppose, for the sake of deriving a contradiction, that $p$ is not unimodal.  Then there must be $i, j, k \in \N$ with $i < j < k$ such that $p_i > p_j < p_k$.  Let $d = \max \{x \in \N: (i \leq x < j) \text{ and } (p_x \geq p_i)\}$.  Let $a = \min\{y : (i < y \leq k) \text{ and } (p_x \geq p_k)\}$.  Let $b = a-1$ and $c = d+1$.  Since $ d < j < a$, $a > b \geq c > d \geq 0$. Also $a+d=b+c$.  But $0 \leq p_c < p_d$ and $0 \leq p_b < p_a$ so $p_a p_d > p_b p_c$, a contradiction to Lemma \ref{L:char} (ii).

We prove (iii) by noting that the sequence $p_n = 2^n+1$ is unimodal but not log-concave and the sequence $(1,0,0,1,1, \ldots)$ is log-concave but not $2$-non-negative.

We now prove (iv). Let $a \geq b \geq c \geq d \geq 0$ with $a+d=b+c$.  Let $t=a-c=b-d$.  If $t=0$, $a=b=c=d$ and we are done.  Now suppose $t \geq 1$.  Since $p$ is positive and log-concave, $p_k/p_{k-1} > 0$ and $p_k/p_{k-1} \geq p_{k+1}/p_k$ for all $k \geq 1$.  Since $c > d$, $p_{d+i}/p_{d+i-1} \geq p_{c+i}/p_{c + i - 1}$ for all integers $i$ with $1 \leq i \leq t$.  Thus $p_b/p_d = \prod_{i=1}^t (p_{d+i}/p_{d+i-1}) \geq \prod_{i=1}^t (p_{c+i}/p_{c+i-1}) = p_a/p_c$, hence $p_b p_c - p_a p_d \geq 0$.  Thus $p \in TN_2$ by Lemma \ref{L:char} (ii).  The proof that $p \in TP_2$ when $p$ is strictly log-concave is analogous.

\end{proofof}

\section{Proof of Theorem \ref{T:TS}} \label{S:non-neg}

\begin{proofof}{Theorem \ref{T:TS}}

If $p(x) = \sum_{n \geq 0} p_n x^n \in \R[[x]]$, we define $\cT(x) = \cT_{p(x)}(x)$, an $\N$ by $\N$ matrix with entries in $\R[[x]]$, by setting \[(\cT(x))_{i,j} = \frac{1}{(j-i)!}\left(\frac{d}{dx} \right)^{j-i}p(x)\] if $j \geq i$ and $(\cT(x))_{i,j} = 0$ otherwise.  Here, we define $(d/dx)^0 p(x) = p(x)$.  Let $\cS(x) = \cS_{p(x)}(x)$ be an $\N$ by $\N$ matrix with entries in $\R[[x]]$ defined by \[(\cS(x))_{i,j} = \frac{1}{j!}\left(\frac{d}{dx} \right)^j p^i(x),\] where $p^0(x)=1$. The derivatives that occur in the definition of $\cT$ and $\cS$ are iterations of the purely formal operation $d/dx : \R[[x]] \to \R[[x]]$ defined by \[\frac{d}{dx} \left(\sum_{n=0}^\infty p_n x^n \right) = \sum_{n=0}^\infty (n+1)p_{n+1}x^n.\]  Note that $\cT(0) = \cT_p$ and $\cS(0) =\cS_p$.

Suppose $M$ is an $\N$ by $\N$ matrix with entries in a set $\cE$. Let $\ell \geq 1$ and let $A,a \in \N^\ell$.  We define $M_{A\times a}$ to be the $\ell$ by $\ell$ matrix with $(M_{A\times a})_{i,j} = M_{A(i),a(j)}$ for all $1 \leq i,j \leq \ell$.  If $A$ and $a$ are strictly increasing (i.e. $A_1 < \cdots < A_\ell$ and $a_1 < \cdots < a_\ell$) then $M_{A \times a}(x)$ is just the size $\ell$ square sub-matrix of $M$ restricted to the rows in $A$ and the columns in $a$.

By assumption, we have $k \in \N$ and $\det(\cT(0)_{A \times a}) \geq 0$ for all strictly increasing $A,a \in \N^{\ell}$ for all integers $\ell$ with $0 \leq \ell \leq k$.  We wish to show $\det(\cS(0)_{A \times a}) \geq 0$ for all strictly increasing $A,a \in \N^\ell$ for all integers $\ell$ with $0 \leq \ell \leq k$.  We will prove this by induction successively on $k$, $\ell$, and $A_1$.  

Since there is nothing to show when $\ell = 0$ we may assume $k, \ell \geq 1$.  If $\cT_p$ is $TN_1$ then $p$ is non-negative and thus $\cS_p$ is $TN_1$.  Thus we may assume that $k, \ell \geq 2$. Suppose $A_1 = 0$. If $a_1 = 0$ as well, the first row of $\cS(0)_{A \times a}$ has a $1$ as its first entry and every other entry $0$. Thus, $\det(\cS(0)_{A \times a}) = \det(\cS(0)_{(A_2, \ldots, A_l) \times (a_2,\ldots,a_l)})$ and we have the result by induction on $\ell$.  If $a_1 >0$ then the first row of $\cS(0)_{A \times a}$ is the zero row and $\det(\cS(0)_{A \times a}) =0$.  Thus we may now assume that $A_1 \geq 1$.

Let $B,b \in \N^\ell$.  Let $\sigma \in S_\ell$ such that $B_{\sigma 1} \leq B_{\sigma 2} \leq \cdots \leq B_{\sigma \ell}$.  Let $B' = (B_{\sigma 1}, B_{\sigma 2}, \ldots, B_{\sigma \ell})$. We define $\sgn(B)$ to be $0$ if $B$ has a repeated entry and, otherwise, $\sgn(B)= \sgn(\sigma)$ where $\sgn(\sigma) = 1$ if $\sigma$ is an even permutation and $-1$ if $\sigma$ is an odd permutation.  We define $b'$ and $\sgn(b)$ analogously. Note that $\det(\cS(x)_{B \times b}) = \sgn(B)\sgn(b) \det(\cS(x)_{B' \times b'}(x))$.

Since the formal derivative $d/dx$ on $\R[[x]]$ satisfies the product rule, we also have the generalized product rule on $\R[[x]]$, namely \[\left(\frac{d}{dx}\right)^n(p(x)q(x)) = \sum_{k=0}^n \binom{n}{k}\left(\left(\frac{d}{dx}\right)^{n-k} p(x)\right)\left(\left(\frac{d}{dx}\right)^k q(x)\right) \text{ for all $p(x),q(x) \in \R[[x]]$}\] where $(d/dx)^0p(x) = p(x)$.  For each $i,j \in [\ell]$, we use this rule to write
\[(\cS(x)_{A \times a})_{i,j} = \frac{1}{a_j!} \sum_{b_j = 0}^{a_j} \binom{a_j}{b_j}\left(\left(\frac{d}{dx}\right)^{a_j-b_j} p(x)\right)\left(\left(\frac{d}{dx}\right)^{b_j} p^{A_i-1}(x)\right)\]
\[= \sum_{b_j \geq 0} 1_{b_j\leq a_j} \left(\frac{1}{(a_j-b_j)!}\left(\frac{d}{dx}\right)^{a_j-b_j} p(x)\right) \left(\frac{1}{b_j!}\left(\frac{d}{dx}\right)^{b_j} p^{A_i-1}(x)\right)\]
where $1_{b_j \leq a_j} = 1$ when $b_j \leq a_j$ and $0$ otherwise.  Let $B = (A_1-1,\ldots,A_\ell-1)$. Since $\det(\cS(x)_{A \times a})$ is multilinear in its columns, we get
\[\det(\cS(x)_{A \times a}) = \sum_{b \in \N^\ell} \prod_{j=1}^\ell \left (\frac{1_{b_j \leq a_j}}{(a_j - b_j)!} \left(\frac{d}{dx}\right)^{a_j-b_j} p(x)\right) \det(\cS(x)_{B \times b})\]
\[ = \sum_{b \in \N^\ell} \det(\cS(x)_{B \times b'})\prod_{j=1}^\ell \sgn(b) \left(\frac{1_{b_j \leq a_j}}{(a_j - b_j)!}\left(\frac{d}{dx}\right)^{a_j-b_j} p(x)\right)  \]
\[ = \sum_{0 \leq b_1 < \cdots < b_\ell} \det(\cS(x)_{B \times b}) \sum_{\sigma \in S(\{b_1, \ldots, b_\ell\})} \sgn(\sigma) \prod_{j=1}^\ell \left(\frac{1_{\sigma_j \leq a_j}}{(a_j - \sigma_j)!}\left(\frac{d}{dx}\right)^{a_j-\sigma_j} p(x)\right) \]
Thus \[\det(\cS(x)_{A \times a}) = \sum_{0\leq b_1 < \cdots < b_\ell} \det(\cS(x)_{B,b}) \det(\cT(x)_{b \times a}).\] It is important to realize that this is a finite sum:  when $b_\ell > a_\ell$ all entries in the $b_\ell$ row of $\cT(x)_{b \times a}$ are $0$ and thus $\det(\cT(x)_{b \times a})=0$. Setting $x=0$ gives \[\det(\cS(0)_{A \times a}) = \sum_{0\leq b_1 < \cdots < b_\ell} \det(\cS(0)_{B,b}) \det(\cT(0)_{b \times a}).\]  By induction on $A_1$, all of the minors appearing in this last sum are non-negative.

It is easily seen that with a little modification this argument will also furnish a proof of the fact that $\cT_p \in TP_k$ implies $\cS_p \in TP_k$.
\end{proofof}

\begin{theorem} Conjecture \ref{C:combinatorial} implies the case $k=2$ of Theorem \ref{T:TS}. 
\end{theorem}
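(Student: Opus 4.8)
The plan is to prove the $TN_2$ implication directly by showing that every minor of $\cS_p$ of order at most two is non-negative, feeding the injection of Conjecture \ref{C:combinatorial} into characterization (iii) of Lemma \ref{L:char}. First I would dispose of the order-$1$ minors: since $\cT_p \in TN_2$ in particular forces $p$ to be non-negative (the order-$1$ minors of $\cT_p$ are its entries $p_n$), each entry $(\cS_p)_{i,j} = (p^i)_j$ is a sum of products of the non-negative numbers $p_n$ and is therefore non-negative. It then remains to bound an arbitrary order-$2$ minor. Fix rows $B < A$ and columns $b < a$; this matches the parameters of the conjecture since $A \geq B \geq 0$ and $a \geq b \geq 0$ (both strict). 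The associated minor of $\cS_p$ is $(p^B)_b(p^A)_a - (p^B)_a(p^A)_b$, so it suffices to establish the inequality $(p^A)_b(p^B)_a \leq (p^A)_a(p^B)_b$.

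The key step is to rewrite this inequality combinatorially. Expanding $(p^i)_j$ as the coefficient of $x^j$ in $p(x)^i$ gives $(p^i)_j = \sum_{\lambda \in \cU^i_j} p_\lambda$. Multiplying two such sums and using that $p_{\lambda_1}p_{\mu_1} = p_{\lambda_1\mu_1}$ for the concatenation $\lambda_1\mu_1 \in \cU^{A+B}_{a+b}$, the inequality to be proved becomes
\[ \sum_{(\lambda_1,\mu_1) \in \cU^A_b \times \cU^B_a} p_{\lambda_1\mu_1} \;\leq\; \sum_{(\lambda_2,\mu_2) \in \cU^A_a \times \cU^B_b} p_{\lambda_2\mu_2}. \]
Both index sets are finite, so these are honest finite sums of non-negative reals.

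Now I would invoke Conjecture \ref{C:combinatorial} to obtain the injection $\gamma : \cU^A_b \times \cU^B_a \hookrightarrow \cU^A_a \times \cU^B_b$ with $\lambda_1\mu_1 \trianglerighteq \lambda_2\mu_2$ whenever $(\lambda_2,\mu_2) = \gamma(\lambda_1,\mu_1)$. For each source pair, the relation $\lambda_1\mu_1 \trianglerighteq \lambda_2\mu_2$ in $\cU^{A+B}_{a+b}$ means the non-increasing rearrangements satisfy $(\lambda_1\mu_1)_p \trianglerighteq (\lambda_2\mu_2)_p$ in $\cP^{A+B}$; since $p_\lambda$ is unchanged by rearrangement and $p \in TN_2$, Lemma \ref{L:char}(iii) (applied with $m = A+B \geq 1$, using $A \geq 1$) yields $p_{\lambda_1\mu_1} \leq p_{\lambda_2\mu_2}$. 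Summing this term-by-term over the source set and then using that $\gamma$ is injective while every $p_{\lambda_2\mu_2}$ is non-negative — so the sum over the image $\gamma(\cU^A_b \times \cU^B_a)$ is at most the sum over all of $\cU^A_a \times \cU^B_b$ — gives exactly the displayed inequality, proving $\cS_p \in TN_2$. With a little modification the $TP_2$ half follows by the same scheme: positivity of $p$ together with the strict analogue of Lemma \ref{L:char}(iii) makes the comparisons strict on the (non-degenerate) minors.

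In this argument essentially all of the genuine difficulty is quarantined inside the conjecture, so no single step is a serious obstacle once $\gamma$ is granted; the work is bookkeeping. The step I would expect to require the most care to state cleanly is the final chaining of the three comparisons — term-domination supplied by $\gamma$, the product inequality $p_{\lambda_1\mu_1} \leq p_{\lambda_2\mu_2}$ supplied by Lemma \ref{L:char}(iii), and the sum-over-a-subset bound supplied by injectivity and non-negativity — because each link leans on a different hypothesis and they must be composed in the correct order, together with the passage from compositions to their partition rearrangements so that Lemma \ref{L:char}(iii) is legitimately applicable.
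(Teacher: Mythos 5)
Your proposal is correct and follows essentially the same route as the paper: reduce $\cS_p \in TN_2$ to the minor inequality $(p^A)_b (p^B)_a \leq (p^A)_a (p^B)_b$, expand both sides as sums of $p_{\lambda\mu}$ over pairs of compositions, and combine the conjectured injection with Lemma \ref{L:char}(iii) (via rearrangement-invariance of $p_\lambda$) and non-negativity of the unmatched terms — indeed you are somewhat more careful than the paper about the order-$1$ minors and the passage from compositions to their partition rearrangements. The one caveat is your closing $TP_2$ aside: the paper's proof of this theorem addresses only the $TN_2$ statement, and a bare injection argument need not yield strict inequality (the dominance relation supplied by $\gamma$ may hold with equality, giving equal terms), so that remark would need a separate argument, though it is inessential to the core result.
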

\begin{proof}  By Lemma \ref{L:char} (ii), $\cS_p \in TN_2$ if and only if $(p^A)_b (p^B)_a \leq (p^A)_a (p^B)_b$ for all $A, B, a, b \in \N$ with $A \geq B$ and $a \geq b$.  We have \[(p^A)_b (p^B)_a = \left(\sum_{\lambda_1 \in \cU^A_b} p_{\lambda_1} \right)\left(\sum_{\mu_1 \in \cU^B_a} p_{\mu_1}\right) = \sum_{(\lambda_1,\mu_1) \in \cU^A_b \times \cU^B_a} p_{\lambda_1 \mu_1}\]
and
\[(p^A)_a (p^B)_b = \sum_{(\lambda_2,\mu_2) \in \cU^A_a \times \cU^B_b} p_{\lambda_2\mu_2}.\]
Conjecture \ref{C:combinatorial} would imply there is an injection $\gamma : \cU^A_b \times \cU^B_a \hookrightarrow \cU^A_a \times \cU^B_b$ such that if $(\lambda_2, \mu_2) = \gamma((\lambda_1,\mu_1))$ then $\lambda_1 \mu_1 \trianglerighteq \lambda_2 \mu_2$.  By Lemma \ref{L:char} (iii) this means that $p_{\lambda_1 \mu_1} \geq p_{\lambda_2 \mu_2}$.  But this means $(p^A)_b (p^B)_a \leq (p^A)_a (p^B)_b$ as every term $p_{\lambda_1\mu_1}$ in $(p^A)_b (p^B)_a$ is matched by $\gamma$ to its own distinct term $p_{\lambda_2\mu_2}$ of $(p^A)_a (p^B)_b$ with $p_{\lambda_1\mu_1} \leq p_{\lambda_2\mu_2}$.
\end{proof}

\section{Proofs of Theorems \ref{T:main1} and \ref{T:main2}} \label{S:main}

Theorems \ref{T:main1} and \ref{T:main2} are immediate corollaries of Theorems \ref{T:main3} and \ref{T:main4} below.

\begin{theorem} \label{T:main3}
Let $X$ be an $\N$-valued random variable with $X \in TN_2$. Then, for all partitions $\lambda$ and $\mu$, $\lambda \trianglerighteq \mu$ implies $C(\lambda,\mu,X)$.
\end{theorem}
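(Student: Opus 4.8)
The plan is to express each probability as a single coefficient extraction from a product of truncated power series, reduce $\lambda \trianglerighteq \mu$ to one cover relation, and then convert the resulting inequality into the $TN_2$ property of $\cS_p$ by an explicit involution. Write $p = p_X$, let $p(x)$ be its probability generating function, and for $t \in \N$ and $f \in \R[[x]]$ let $\tau_t f$ denote the truncation of $f$ keeping only terms of degree at most $t$. Since $Y_i$ has generating function $p(x)^{\nu(i)}$, and $E(\nu,X,j,t)$ asks each $Y_i \le t$ while $\sum_i Y_i = j$, I would first record the identity
\[P(E(\nu,X,j,t)) = [x^j] \prod_{i \ge 1} \tau_t\big(p(x)^{\nu(i)}\big),\]
a finite product since $\nu$ has finite support and $\tau_t(p^0) = 1$. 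The hypothesis $X \in TN_2$ says $\cT_p \in TN_2$, so the case $k=2$ of Theorem~\ref{T:TS} gives $\cS_p \in TN_2$; by Lemma~\ref{L:char}(ii) this is exactly the family of inequalities $(p^A)_b(p^B)_a \le (p^A)_a(p^B)_b$ for $A \ge B \ge 0$ and $a \ge b \ge 0$, which I will use as the only analytic input.

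Next I would reduce to a cover. Since $\lambda \trianglerighteq \mu$ forces $|\lambda| = |\mu| = n$, both lie in the finite poset $\cP_n$, so by transitivity it suffices to prove $P(E(\lambda,X,j,t)) \le P(E(\mu,X,j,t))$ for all $j,t$ whenever $\lambda$ covers $\mu$. By Lemma~\ref{L:cover} such a cover alters $\mu$ in exactly two coordinates $r < s$: writing $a = \mu(r) \ge \mu(s) = d \ge 1$ we have $\lambda(r) = a+1$, $\lambda(s) = d-1$, and $\lambda(i) = \mu(i)$ otherwise. The factors over $i \notin \{r,s\}$ coincide for $\lambda$ and $\mu$ and have non-negative coefficients (as $p$ is non-negative), and multiplication by such a series preserves coefficientwise inequalities. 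Hence it is enough to establish the coefficientwise domination
\[[x^j]\,\tau_t(p^{a+1})\,\tau_t(p^{d-1}) \;\le\; [x^j]\,\tau_t(p^{a})\,\tau_t(p^{d}) \qquad (\text{all } j),\]
for every $t$, with $a \ge d \ge 1$.

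The heart of the argument is this last inequality, and it is where the truncation — the \emph{only} feature separating the two sides, since without it both equal $[x^j]p^{a+d}$ — must be handled. Expanding $p^{a+1} = p\cdot p^a$ and $p^{d} = p\cdot p^{d-1}$, both sides become sums of the common non-negative summand $p_m (p^a)_\alpha (p^{d-1})_\beta$ over triples $(m,\alpha,\beta)$ with $m+\alpha+\beta = j$, differing only in their truncation constraints: the left imposes $m+\alpha \le t$ and $\beta \le t$, the right imposes $\alpha \le t$ and $m+\beta \le t$. The key observation I would exploit is that the involution $(m,\alpha,\beta) \mapsto (m,\beta,\alpha)$ carries the index set appearing only on the left bijectively onto the set appearing only on the right, and that on the left-only region one necessarily has $\beta > \alpha$. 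Applying it term by term collapses the difference of the two sides to
\[\sum_{(m,\alpha,\beta)} p_m\big[(p^a)_\beta (p^{d-1})_\alpha - (p^a)_\alpha (p^{d-1})_\beta\big],\]
summed over the left-only region, where $\beta > \alpha$ and $a \ge d-1$. Each bracket is non-negative by the $\cS_p \in TN_2$ minor inequality recorded above (exponents $a \ge d-1$, columns $\beta > \alpha$), and $p_m \ge 0$, so the difference is non-negative. I expect the main obstacle to be precisely the verification that this swap really is a bijection between the two symmetric-difference regions and that it always lands in the regime $\beta > \alpha$ in which the minor inequality applies; once that matching is pinned down, everything else is routine bookkeeping.
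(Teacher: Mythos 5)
Your proposal is correct and follows essentially the same route as the paper: reduction to a cover via Lemma \ref{L:cover}, the $\cS_p \in TN_2$ inequalities from the $k=2$ case of Theorem \ref{T:TS} via Lemma \ref{L:char}(ii), and the truncated-product inequality \eqref{E:AB} proved by canceling common terms and swapping the two outer indices on the symmetric difference. Your involution $(m,\alpha,\beta) \mapsto (m,\beta,\alpha)$ on the left-only region, together with the observation that $m+\alpha \le t < m+\beta$ forces $\beta > \alpha$, is exactly the paper's step of ``exchanging the role of the variables $a,b$'' with $b \le t - c < a$, so no gap remains.
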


\begin{theorem} \label{T:main4}
Let $X$ be a $\N$-valued random variable with $\range(X) = \{0,1\ldots,r\}$ for some positive integer $r$. Then, for all partitions $\lambda$ and $\mu$ with $|\lambda| = |\mu|$, $C(\lambda,\mu,X)$ implies $\lambda \trianglerighteq \mu$.
\end{theorem}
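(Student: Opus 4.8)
The plan is to prove the contrapositive: assuming $\lambda \not\trianglerighteq \mu$ with $|\lambda| = |\mu| = n$, I will produce a single pair $(j,t)$ for which $P(E(\lambda,X,j,t)) > P(E(\mu,X,j,t))$, which contradicts $C(\lambda,\mu,X)$. Everything hinges on the hypothesis $\range(X) = \{0,1,\ldots,r\}$, i.e.\ that $p_k := P(X=k) > 0$ for exactly $0 \le k \le r$.

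First I would reparametrize each cell by its \emph{deficit} $r - (\text{number of balls})$, which ranges over $\{0,1,\ldots,r\}$, each value with positive probability. For a partition $\nu$ and a threshold of the form $t = rc$ with $c \in \N$, the event $E(\nu,X,rn-D,rc)$ becomes: the total deficit equals $D$, and the deficit in row $i$ is at least $m_i := r\max(\nu(i)-c,0)$, since $Y_i \le rc$ translates precisely to this lower bound on the row deficit. Because the deficit of a row of $\nu(i)$ cells can realize \emph{every} integer in $[0,\,r\nu(i)]$ with positive probability (full support), the set of achievable total deficits subject to these constraints is exactly the integer interval $[\,D_{\min}(\nu,c),\,rn\,]$, where $D_{\min}(\nu,c) := \sum_i m_i = r\sum_i \max(\nu(i)-c,0)$. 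Consequently
\[
P(E(\nu,X,rn-D,rc)) > 0 \iff D_{\min}(\nu,c) \le D \le rn ,
\]
so the probability is strictly positive exactly when the event is feasible. This ``positivity from feasibility'' step is the crux, and it is precisely where the assumption that $X$ attains \emph{every} value in $\{0,\ldots,r\}$ is indispensable.

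Next I would translate the failure of dominance into a strict inequality among the $D_{\min}(\cdot,c)$. Observe that $\sum_i \max(\nu(i)-c,0)$ counts the cells of the Ferrers diagram lying in columns $> c$, so $\sum_i \max(\nu(i)-c,0) = n - \sum_{k=1}^{c}\nu'(k)$ for the conjugate partition $\nu'$. Hence the family of inequalities ``$\sum_i \max(\lambda(i)-c,0) \ge \sum_i \max(\mu(i)-c,0)$ for all integers $c \ge 0$'' is equivalent to ``$\sum_{k\le c}\lambda'(k) \le \sum_{k\le c}\mu'(k)$ for all $c$'', i.e.\ to $\lambda' \trianglelefteq \mu'$; and since conjugation reverses the dominance order, this is equivalent to $\lambda \trianglerighteq \mu$. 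Therefore $\lambda \not\trianglerighteq \mu$ furnishes an integer $c \ge 0$ with
\[
D_{\min}(\lambda,c) = r\sum_i \max(\lambda(i)-c,0) \;<\; r\sum_i \max(\mu(i)-c,0) = D_{\min}(\mu,c).
\]

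Finally I would combine the two steps: set $D := D_{\min}(\lambda,c)$, $t := rc$, $j := rn - D$, noting $0 \le D \le rn$. By the displayed equivalence, the $\lambda$-event is feasible ($D \ge D_{\min}(\lambda,c)$), so $P(E(\lambda,X,j,t)) > 0$, while the $\mu$-event is infeasible ($D < D_{\min}(\mu,c)$), so $P(E(\mu,X,j,t)) = 0$. This yields $P(E(\lambda,X,j,t)) > P(E(\mu,X,j,t))$, contradicting $C(\lambda,\mu,X)$ and completing the contrapositive. The only delicate points are the full-support positivity argument and getting the direction of the conjugation duality right; both become routine once the deficit reformulation is in place, so I expect the main obstacle to be careful bookkeeping rather than any conceptual difficulty. (Restricting to thresholds $t = rc$ is merely a convenient normalization that makes the feasibility interval clean.)
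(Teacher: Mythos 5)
Your proof is correct and is essentially the paper's argument in probabilistic dress: your maximal feasible ball count $rn - D_{\min}(\nu,c) = r\sum_{k=1}^{c}\nu'(k)$ at threshold $t = rc$ is exactly the degree $r\sum_{k=1}^{t}\lambda'(k)$ of the truncated product $\prod (p^{\lambda(i)}(x)|_{rt})$ that the paper computes, and both proofs use full support of $X$ on $\{0,\ldots,r\}$ to get strict positivity at that extreme value and then conclude via the conjugation duality $\lambda \trianglerighteq \mu \iff \lambda' \trianglelefteq \mu'$. The only difference is presentational (deficits and feasibility intervals versus degrees of truncated generating polynomials, and contrapositive phrasing), not a different route.
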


In order to prove Theorems \ref{T:main3} and \ref{T:main4} we will rephrase the condition $C(\lambda,\mu,X)$ as a condition on $p_X(x)$, its probability generating function.

Let $p(x) = \sum_{n=0}^\infty p_n x^n \in \R[[x]]$.  For any $t \in \N$, we define the \emph{truncation of $p(x) = \sum_{n=0}^\infty p_n x^n \in \R[[x]]$ to degree $t$} to be $p(x)|_t = \sum_{n=0}^t p_n x^n$. Given a positive integer $m$, $\lambda \in \cP^m$ and $t \in \N$, let \[f(\lambda,p(x),t,x) = \prod_{i=1}^m (p^{\lambda(i)}(x)|_t),\] where $p^0(x) = 1$ by definition.  

Let $\R_{\geq 0}[[x]] = \{ \sum_{n=0}^\infty p_n x^n : p_n \geq 0 \text{ for all $n \in \N$}\}$.  Given $p(x) \in \R_{\geq 0}[[x]]$ let $p(1) = \sum_{n=0}^\infty p_n$. If $p(1) \in (0, +\infty)$, we say \emph{$X$ is distributed according to $p(x)$} if and only if $p_X(x) = p(x)/p(1)$. We denote this by $X \sim p(x)$.

If we also have $q(x) = \sum_{n=0}^\infty q_n x^n \in \R[[x]]$, we say $p(x)$ \emph{coefficient-wise dominates} $q(x)$ if and only if $p_n \geq q_n$ for all $n \in \N$.  We denote this by $q(x) \sqsubseteq p(x)$ or, equivalently, $p(x) \sqsupseteq q(x)$.

Let $C(\lambda,\mu,p(x))$ be the condition that \[ \forall t \in \Z_{\geq 0}, \; \; f(\lambda,p(x),t,x) \sqsubseteq f(\mu,p(x),t,x).\]  

\begin{lemma} \label{L:generating} If $p(x) \in \R_{\geq 0}[[x]]$ with $p(1) \in (0, \infty)$ and $X$ is an $\N$-valued random variable with $X \sim p(x)$ then $C(\lambda,\mu,p(x))$ is equivalent to $C(\lambda,\mu,X)$.

\end{lemma}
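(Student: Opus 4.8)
The plan is to compute $P(E(\nu,X,j,t))$ explicitly as a coefficient of $f(\nu,p_X(x),t,x)$, and then relate $p_X(x)$ to $p(x)$ through the normalizing constant $p(1)$. First I would fix a partition $\nu$ and choose $m$ so that $\nu \in \cP^m$ (i.e.\ $\nu(i)=0$ for $i>m$; this is legitimate since $\nu$ has finite support). For each $i \in [m]$ the variable $Y_i$ is a sum of $\nu(i)$ independent copies of $X$, so its probability generating function is $p_X(x)^{\nu(i)}$, and hence the coefficient of $x^s$ in $p_X(x)^{\nu(i)}$ equals $P(Y_i=s)$. The ``defective'' generating function recording only the outcomes with $Y_i \le t$ is exactly the truncation $p_X(x)^{\nu(i)}\big|_t$. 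Because the $Y_i$ are independent and only finitely many of them are not identically $0$ (each $i>m$ has $Y_i\le t$ automatically and contributes the factor $1$), the convolution formula for the distribution of $\sum_i Y_i$ gives
\[
P(E(\nu,X,j,t)) \;=\; [x^j]\prod_{i=1}^m \bigl(p_X(x)^{\nu(i)}\big|_t\bigr) \;=\; [x^j]\,f(\nu,p_X(x),t,x).
\]
This is the main structural step; the one thing to verify carefully is that the constraint $Y_i \le t$ on each coordinate corresponds precisely to truncating each factor to degree $t$ before multiplying, which follows directly from independence.

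Next I would pass from $p_X(x)$ to $p(x)$. Since $X \sim p(x)$ we have $p_X(x) = p(x)/p(1)$ with $p(1)>0$, so $p_X(x)^{\nu(i)} = p(1)^{-\nu(i)}\,p(x)^{\nu(i)}$. Truncation commutes with multiplication by the scalar $p(1)^{-\nu(i)}$, giving $p_X(x)^{\nu(i)}\big|_t = p(1)^{-\nu(i)}\bigl(p(x)^{\nu(i)}\big|_t\bigr)$. Taking the product over $i\in[m]$ and using $\sum_{i=1}^m \nu(i) = |\nu|$ yields
\[
f(\nu,p_X(x),t,x) \;=\; p(1)^{-|\nu|}\,f(\nu,p(x),t,x),
\]
and therefore $P(E(\nu,X,j,t)) = p(1)^{-|\nu|}\,[x^j]\,f(\nu,p(x),t,x)$ for all $j,t\in\N$.

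Finally I would apply this identity to both $\lambda$ and $\mu$. Writing $n = |\lambda| = |\mu|$, the positive constant $p(1)^{-n}$ is common to both sides, so for every $j,t$ we have $P(E(\lambda,X,j,t)) \le P(E(\mu,X,j,t))$ if and only if $[x^j]\,f(\lambda,p(x),t,x) \le [x^j]\,f(\mu,p(x),t,x)$. Quantifying over all $j$ (equivalently, over all coefficients, which is the content of $\sqsubseteq$) and over all $t$ then shows that $C(\lambda,\mu,X)$ holds if and only if $C(\lambda,\mu,p(x))$ holds. The main obstacle, such as it is, is not deep but lies in these two bookkeeping points: correctly encoding the constraint $Y_i\le t$ as a degree-$t$ truncation in the first display, and tracking the normalization so that the factor $p(1)^{-|\nu|}$ cancels. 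The cancellation is precisely what requires $|\lambda|=|\mu|$, which is exactly the setting of the theorems in which the condition $C$ is applied.
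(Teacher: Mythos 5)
Your proposal is correct and is essentially the paper's own argument: the paper proves the lemma by asserting the single identity $f(\lambda,p(x),t,x) = \sum_{j=0}^\infty (p(1))^{|\lambda|} P(E(\lambda,X,j,t))\, x^j$, which is exactly your identity $P(E(\nu,X,j,t)) = p(1)^{-|\nu|}[x^j]f(\nu,p(x),t,x)$, and you simply supply the bookkeeping (truncation encoding $Y_i \le t$ via independence, and cancellation of $p(1)^{-n}$ using $|\lambda|=|\mu|$) that the paper leaves as ``easy enough to see.''
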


\begin{proof}  It is easy enough to see that $f(\lambda,p(x),t,x) = \sum_{j=0}^\infty (p(1))^{|\lambda|} P(E(\lambda,X,j,t)) x^j$ for all $t \in \N$. \end{proof}

\begin{proofof}{Theorem \ref{T:main3}} Let $X$ be an $\N$-valued random with $X \in TN_2$.  For all $n \in \N$, let $p_n = P(X=n)$.  Then $p(x) = p_X(x) = \sum_{n=0}^\infty p_n x^n$ and $p = p_X = \{p_n\}_{n=0}^\infty$.  Since $X \in TN_2$, $\cT_p \in TN_2$ by definition. By Theorem \ref{T:TS} $\cS_p$ is $TN_2$. (This would also follow from Conjecture \ref{C:combinatorial} if it were true.) This means \beq{E:S} (p^A)_b (p^B)_a \leq (p^A)_a (p^B)_b \text{ for all $A \geq B \geq 0$ and $a \geq b \geq 0$}. \enq

We now mimic the proof of Lemma \ref{L:char} (ii).  To show that $\lambda \trianglerighteq \mu$ implies $C(\lambda,\mu,X)$ for all $\lambda, \mu \in \cP_n$ we may assume $\lambda \cdot \triangleright \mu$.  Let $j$ and $i$ with $j > i \geq 1$ witness this fact as in Lemma \ref{L:cover}.  Let $A = \mu(i)$ and $B = \lambda(j)$.  Then $A > B$ and $\lambda(i) = A+1$ and $\mu(j) = B+1$.

We now show that that for all $A > B \geq 0$ and for all $t \in \N$, \beq{E:AB}(p^{A+1}(x)|_t)(p^B(x)|_t) \sqsubseteq (p^{A}(x)|_t)(p^{B+1}(x)|_t).\enq  This will be enough to prove $C(\lambda, \mu, p(x))$ and hence, by Lemma \ref{L:generating}, $C(\lambda,\mu,X)$.

It is easy enough to verify that if $f(x) \in \R_{\geq 0}[[x]]$ and $g(x),h(x) \in \R[[x]]$ with $g(x) \sqsubseteq h(x)$ then $f(x)g(x) \sqsubseteq f(x)h(x)$.  Since $\eqref{E:AB}$ implies $p^{\lambda(i)}(x)p^{\lambda(j)}(x) \sqsubseteq p^{\mu(i)}(x)p^{\mu(j)}(x)$, 
\[f(\lambda,p(x),t,x) = \left(\prod_{\nu \not \in \{i,j\}} (p^{\lambda(\nu)}(x)|_t)\right) (p^{\lambda(i)}(x)|_t)(p^{\lambda(j)}(x)|_t)\]
\[= \left(\prod_{\nu \not \in \{i,j\}} (p^{\mu(\nu)}(x)|_t)\right) (p^{\lambda(i)}(x)|_t)(p^{\lambda(j)}(x)|_t)\]
\[ \sqsubseteq \left(\prod_{\nu \not \in \{i,j\}} (p^{\mu(\nu)}(x)|_t\right) (p^{\mu(i)}(x)|_t)(p^{\mu(j)}(x)|_t) = f(\mu,p(x),t,x).\]  The first equality holds by (a) of Lemma \ref{L:cover}.

It remains to show \eqref{E:AB}. Fixing $i \in \N$, we must show that the corresponding coefficients of $x^i$ in the two polynomials in \eqref{E:AB} satisfy
\[\sum_{b + c + a = i, b + c \leq t, a \leq t} (p^A)_b p_c (p^B)_a 
\leq \sum_{b + c + a = i, b \leq t, c + a \leq t} (p^A)_b p_c (p^B)_a.\]  In this last inequality and in the ones that follow, the indices $a,b,c$ range over $\N$.

Canceling the terms that appear in both summations, we get 
\[\sum_{b + c + a = i, b + c \leq t, a \leq t, c + a > t} (p^A)_b p_c (p^B)_a \leq 
\sum_{b + c + a = i, b \leq t, c + a \leq t, b + c > t} (p^A)_b p_c (p^B)_a.\] Exchanging the role of the variables $a,b$ in the summation on on the right of this last inequality, we get
\[\sum_{b + c + a = i, b + c \leq t, a \leq t, c + a > t} (p^A)_b p_c (p^B)_a 
\leq 
\sum_{b + c + a = i, b + c \leq t, a \leq t, c + a > t} (p^A)_a p_c (p^B)_b.\]  Fix $c$. For the tuples $(b,c,a)$ in the summations, we have $b \leq t-c < a$ and \eqref{E:S} implies $(p^A)_b p_c (p^B)_a \leq (p^A)_a p_c (p^B)_b$ as $p_c \geq 0$.
\end{proofof}

We complete the paper by giving a proof of Theorem \ref{T:main4}.

Given $\lambda \in \cP$, let $\lambda' \in \cP$ be the \emph{dual} partition given by 
\[\lambda'(i) = |\{j : (j \geq 1) \text{ and } (\lambda(j) \geq i)\}|, \text{ for all positive integers $i$}.\]  It is a standard result that for all $\lambda, \mu \in \cP$, $\lambda \trianglerighteq \mu$ if and only if $\lambda' \trianglelefteq \mu'$, see \cite{James:1981aa}, 1.4.11, p.26. 

\begin{proofof}{Theorem \ref{T:main4}} 
Let $X$ and $r$ be as in the statement of the theorem.  Let $p(x) = p_X(x) = \sum_{n=0}^r p_n x^n$.  By Lemma \ref{L:generating}, we need to prove $f(\lambda,p(x),t,x) \sqsubseteq f(\mu,p(x),t,x)$ implies $\lambda \trianglerighteq \mu$.  Since $\range(X) = \{0, 1, \ldots, r\}$, for any $a ,t \in \N$ and $\lambda \in \cP$, $p(x)$, $p^a(x)$, $p^a(x)|_t$ and $f(\lambda,p(x),t,x)$ will all have their coefficients supported on initial segments of $\N$.  Let $1 \leq t \leq \lambda(1)$. Note that \[\deg\left(\prod (p^{\lambda(i)}(x)|_{rt})\right) = \sum_{\lambda(i) \geq t} r t + \sum_{\lambda(i) < t} r \lambda(i) = r \sum_{k=1}^t \lambda'(k).\] Since $\prod (p^{\lambda(i)}(x)|_{rt}) \sqsubseteq \prod (p^{\mu(i)}(x)|_{rt})$, \[\deg(\prod (p^{\lambda(i)}(x)|_{rt}) \leq \deg(\prod (p^{\mu(i)}(x)|_{rt})\] or \[r \sum_{k=1}^t \lambda'(k) \leq r \sum_{k=1}^t \mu'(k).\]  Thus $\lambda' \trianglelefteq \mu'$, and hence, $\lambda \trianglerighteq \mu$.  
\end{proofof}

\section{Notes}

We record the following observations on the assumptions of Theorem \ref{T:main1}.

Let $X$ be an $\N$-valued random variable.  If $\range(X) = \{0,1\}$, then $X$ is automatically $2$-non-negative and $C(\lambda,\mu,X)$, i.e. \eqref{E:Condprob}, always holds. If $\range(X) = \{0,1,2\}$, some evidence suggests that $C(\lambda,\mu,X)$ holds without the restriction that $X$ be $2$-non-negative.  If $\range(X) = \{0,1, \ldots, r\}$ for an integer $r$ with $r \geq 3$, some additional constraint on $X$ is necessary for $C(\lambda,\mu,X)$ to hold.  For example, if $Y$ is uniformly distributed on $\{0,1,3\}$, then even though $(4,2) \trianglerighteq (3,3)$, $P(E((4,2),Y,12,6)) = 10/729 > 9/729 = P(E((3,3),Y,12,6))$.  For $q \in [0,1)$, define the random variable $X$ with $\range(X) = \{0,1,2,3\}$ by setting $P(X = k) = q/3$ if $k \in \{0,1,3\}$ and $P(X=2) = 1- q$.  For $q$ sufficiently close to $1$, $P(E((4,2),X,12,6)) > P(E((3,3),X,12,6))$.

If $|\range(X)| \in \{1,2\}$ then $X \in TN_2$.  However some additional restriction on $X$ is still needed $C(\lambda,\mu,X)$ to hold.  For example, if $P(X=0) = 1$ then for every $\lambda \in \cP$, $P(E(\lambda, X,j,t)) = 1$ if $j=0$ and is $0$ otherwise.  Thus in this case $C(\lambda,\mu,X)$ holds for \emph{any} partitions $\lambda$ and $\mu$.  If $P(X=1)=1$, then $C(\lambda,\mu,X)$ holds if and only if $|\lambda| = |\mu|$ and $\mu(1) \leq \lambda(1)$.

\bibliography{master}{}
\bibliographystyle{plain}

\end{document}